\providecommand{\algorithmname}{Algorithm}
\numberwithin{equation}{section}
\numberwithin{figure}{section}
\newenvironment{lyxcode}
	{\par\begin{list}{}{
		\setlength{\rightmargin}{\leftmargin}
		\setlength{\listparindent}{0pt}
		\raggedright
		\setlength{\itemsep}{0pt}
		\setlength{\parsep}{0pt}
		\normalfont\ttfamily}%
	 \item[]}
	{\end{list}}
\theoremstyle{plain}
\newtheorem{thm}{\protect\theoremname}[section]
\theoremstyle{plain}
\newtheorem{prop}[thm]{\protect\propositionname}
\theoremstyle{plain}
\newtheorem{assumption}[thm]{\protect\assumptionname}
\theoremstyle{remark}
\newtheorem{rem}[thm]{\protect\remarkname}
\theoremstyle{plain}
\newtheorem{lyxalgorithm}[thm]{\protect\algorithmname}
\theoremstyle{remark}
\newtheorem*{claim*}{\protect\claimname}
\newcommand{\sspan}{\mbox{\rm span}}
\providecommand{\algorithmname}{Algorithm}
\providecommand{\assumptionname}{Assumption}
\providecommand{\claimname}{Claim}
\providecommand{\propositionname}{Proposition}
\providecommand{\remarkname}{Remark}
\providecommand{\theoremname}{Theorem}
\begin{document}
\begin{lyxcode}
\title[Dykstra linear convergence: sets case]{Linear convergence of distributed Dykstra's algorithm for sets under an intersection property} 
\end{lyxcode}

\subjclass[2010]{68Q25, 68W15, 90C25, 90C30, 65K05}
\begin{abstract}
We show the linear convergence of a distributed Dykstra's algorithm
for sets intersecting in a manner slightly stronger than the usual
constraint qualifications.
\end{abstract}

\author{C.H. Jeffrey Pang}
\thanks{C.H.J. Pang acknowledges grant R-146-000-265-114 from the Faculty
of Science, National University of Singapore. }
\curraddr{Department of Mathematics\\ 
National University of Singapore\\ 
Block S17 08-11\\ 
10 Lower Kent Ridge Road\\ 
Singapore 119076 }
\email{matpchj@nus.edu.sg}
\date{\today{}}
\keywords{Distributed optimization, Dykstra's algorithm, linear convergence}

\maketitle
\tableofcontents{}

\section{Introduction}

Let $G=(V,E)$ be an undirected graph. For all $i\in V$, let $C_{i}\subset\mathbb{R}^{m}$
be closed convex sets, and $\bar{x}_{i}\in\mathbb{R}^{m}$. For a
closed convex set $C$, let $\delta_{C}(\cdot)$ be its indicator
function. Consider the distributed optimization problem
\begin{equation}
\begin{array}{c}
\underset{x\in\mathbb{R}^{m}}{\min}\underset{i\in V}{\overset{\phantom{i\in V}}{\sum}}\left[\delta_{C_{i}}(x)+\frac{1}{2}\|x-\bar{x}_{i}\|^{2}\right],\end{array}\label{eq:dist_opt_pblm}
\end{equation}
where communications between two vertices in $V$ occur only along
edges in $E$. In Remark \ref{rem:WLOG-bar-x-equal}, we explain that
we can assume that all $\bar{x}_{i}$ are equal to some $\bar{x}$
without losing any generality. The problem is therefore equivalent
to projecting $\bar{x}$ onto $\cap_{i\in V}C_{i}$ in a distributed
manner.

\subsection{A review of the distributed Dykstra's splitting}

In our earlier paper \cite{Pang_Dist_Dyk}, we considered the more
general problem than \eqref{eq:dist_opt_pblm} where $\delta_{C_{i}}(\cdot)$
can be general closed convex functions instead. We proposed a deterministic
distributed asynchronous decentralized algorithm based on dual ascent
for \eqref{eq:dist_opt_pblm} that converges to the primal minimizer,
and call it the distributed Dykstra's algorithm. Our approach was
motivated by work on Dykstra's algorithm in \cite{Dykstra83,BD86,Gaffke_Mathar,Hundal-Deutsch-97}.
See also \cite{Han88}. We also remark that the dual ascent idea had
been discussed in \cite{Combettes_Dung_Vu_JMAA,Combettes_Dung_Vu_SVAA,Pesquet_Abboud_gang_2017_JMIV}.
We refer to the introduction in \cite{Pang_Dist_Dyk} for more historical
summary of these methods. Part of the contribution in \cite{Pang_Dist_Dyk}
was to point out that the dual ascent idea leads to a desirable distributed
optimization algorithm. We give more details of the distributed Dykstra's
algorithm in Section \ref{sec:Preliminaries}.

\subsection{Linear convergence of Dykstra's algorithm}

A well known algorithm for solving \eqref{eq:dist_opt_pblm} is Dykstra's
algorithm. The primal problem and its corresponding (Fenchel) dual
are typically written as 
\[
\begin{array}{c}
\underset{x\in\mathbb{R}^{m}}{\min}\frac{1}{2}\|x-\bar{x}\|^{2}+\underset{i\in V}{\overset{}{\sum}}\delta_{C_{i}}(x)\text{ and }\underset{z_{i}\in\mathbb{R}^{m},i\in V}{\max}\frac{1}{2}\|\bar{x}\|^{2}-\frac{1}{2}\left\Vert \bar{x}-\underset{i\in V}{\overset{\phantom{i\in V}}{\sum}}z_{i}\right\Vert ^{2}-\underset{i\in V}{\overset{\phantom{i\in V}}{\sum}}\delta_{C_{i}}^{*}(z_{i})\end{array}
\]
respectively, and solved by block coordinate maximization on the dual
problem. (See \cite{BD86,Han88,Gaffke_Mathar}). (Note that this dual
is different from \eqref{eq:dist-dyk-Fenchel-dual}.) In the case
when $C_{i}$ are halfspaces, linear convergence of Dykstra's algorithm
was established in \cite{Iusem_DePierro_Hildreth}, with refined rates
given in \cite{Deutsch_Hundal_rate_Dykstra}. We extended the linear
rates to polyhedra in \cite{Pang_lin_rate_Dykstra}.

A linear convergence rate of Dykstra's algorithm assures that a high
accuracy solution can be obtained in a reasonable amount of time.
This would then allow the algorithm to be used as a subroutine of
other optimization algorithms. For example, the distributed optimization
algorithms \cite{Aybat_Hamedani_2016,Scutari_ASY_SONATA_2018} (and
perhaps many others) make use of the averaged consensus algorithm
as a subroutine. (The linear convergence rate of averaged consensus
is used in the convergence proof of the main distributed optimization
algorithm.) Since averaged consensus is a particular case of the distributed
Dykstra's algorithm with all $C_{i}$ being $\mathbb{R}^{m}$, it
is plausible to make use of the distributed Dykstra's algorithm to
help solve constrained distributed problems. 

\subsection{Contributions of this paper}

Even though we have observed linear convergence rates of the distributed
Dykstra's algorithm in \cite{Pang_rate_D_Dyk} in our numerical experiments
for the case when some of the terms are indicator functions of closed
convex sets, it seems that there is no theoretical justification yet
of linear convergence for both Dykstra's original algorithm and for
the distributed Dykstra's algorithm beyond the polyhedral case. As
is well-known, the intersection $\cap_{i\in V}C_{i}$ can be sensitive
to the perturbation of the sets $C_{i}$ \cite{Kruger_06}, so additional
constraint qualifications are needed for the linear convergence of
the method of alternating projections (see for example \cite{BB96_survey}).

In this paper, we prove the asymptotic linear convergence of the distributed
Dykstra's algorithm when the functions are indicator functions of
sets that are not necessarily polyhedral. We assume that the sets
satisfy a property on systems of intersections of sets stronger than
what is typically studied in the method of alternating projections.
We also make assumptions that are closely related to conditions used
to prove linear convergence in proximal algorithms. 

\subsection{Notation}

Variables in bold, like $\mathbf{x}$ and $\mathbf{z}_{i}$, typically
lie in the space $[\mathbb{R}^{m}]^{|V|}$, while variables not in
bold, like $x$ and $y$, typically lie in $\mathbb{R}^{m}$. All
norms shall be the 2-norm. We often use ``$\hat{\phantom{x}}"$ to
represent the unit vector in a given direction. For example, $\hat{x}_{i}^{0}=\frac{x_{i}^{0}}{\|x_{i}^{0}\|}$. 

\section{Preliminaries \label{sec:Preliminaries}}

In this section, we lay down the preliminaries of the paper. 

For each $i\in V$, let $\mathbf{f}_{i}:[\mathbb{R}^{m}]^{|V|}\to\mathbb{R}\cup\{\infty\}$
be defined by 
\begin{equation}
\mathbf{f}_{i}(\mathbf{x})=\delta_{C_{i}}([\mathbf{x}]_{i}).\label{eq:f-component}
\end{equation}
For each $(i,j)\in E$, define the halfspaces $H_{(i,j)}$ to be 
\[
H_{(i,j)}:=\{\mathbf{x}\in[\mathbb{R}^{m}]^{|V|}:\mathbf{x}_{i}=\mathbf{x}_{j}\}.
\]
Since the graph is connected, the intersection of all these halfspaces
is the diagonal set defined by 
\begin{equation}
D:=\cap_{e\in E}H_{e}=\{\mathbf{x}\in[\mathbb{R}^{m}]^{|V|}:\mathbf{x}_{1}=\mathbf{x}_{2}=\cdots=\mathbf{x}_{|V|}\}.\label{eq:def-D-hyperplane}
\end{equation}
For each $e\in E$, define $\mathbf{f}_{e}:[\mathbb{R}^{m}]^{|V|}\to\mathbb{R}$
by $\mathbf{f}_{e}(\mathbf{x})=\delta_{H_{e}}(\mathbf{x})$. The setting
for the distributed Dykstra's algorithm that is easily seen to be
equivalent to \eqref{eq:dist_opt_pblm} is 
\begin{equation}
\begin{array}{c}
\underset{\mathbf{x}\in[\mathbb{R}^{m}]^{|V|}}{\min}\frac{1}{2}\|\mathbf{x}-\bar{\mathbf{x}}\|^{2}+\underset{i\in V}{\sum}\mathbf{f}_{i}(\mathbf{x})+\underset{e\in E}{\overset{\phantom{\alpha\in\bar{E}}}{\sum}}\delta_{H_{e}}(\mathbf{x}),\end{array}\label{eq:general-framework}
\end{equation}
where $\bar{\mathbf{x}}\in[\mathbb{R}^{m}]^{|V|}$ is such that each
component of $[\bar{\mathbf{x}}]_{i}$, where $i\in V$, is equal
to $\bar{x}$. Let the dual variables be $\mathbf{z}=\{\mathbf{z}_{\alpha}\}_{\alpha\in V\cup E}$,
where each $\mathbf{z}_{\alpha}\in[\mathbb{R}^{m}]^{|V|}$. The (Fenchel)
dual of \eqref{eq:general-framework} can be calculated to be 
\begin{equation}
\begin{array}{c}
\underset{\mathbf{z}_{\alpha}\in[\mathbb{R}^{m}]^{|V|},\alpha\in V\cup E}{\max}\frac{1}{2}\|\bar{\mathbf{x}}\|^{2}-\frac{1}{2}\bigg\|\bar{\mathbf{x}}-\underset{\alpha\in V\cup E}{\overset{\phantom{i\in V}}{\sum}}\mathbf{z}_{\alpha}\bigg\|^{2}-\underset{i\in V}{\overset{\phantom{i\in V}}{\sum}}\delta_{C_{i}}^{*}(z_{i})-\underset{e\in E}{\overset{\phantom{i\in V}}{\sum}}\delta_{H_{e}}^{*}(\mathbf{z}_{e}).\end{array}\label{eq:dist-dyk-Fenchel-dual}
\end{equation}

\begin{prop}
\label{prop:sparsity}(Sparsity) If the value in \eqref{eq:dist-dyk-Fenchel-dual}
is finite, then 
\begin{enumerate}
\item If $i\in V$, then $\mathbf{z}_{i}\in[\mathbb{R}^{m}]^{|V|}$ is such
that $[\mathbf{z}_{i}]_{j}=0$ for all $j\in V\backslash\{i\}$.
\item If $(i,j)\in E$, then $\mathbf{z}_{(i,j)}\in[\mathbb{R}^{m}]^{|V|}$
is such that $[\mathbf{z}_{(i,j)}]_{k}=0$ for all $k\in V\backslash\{i,j\}$,
and $[\mathbf{z}_{(i,j)}]_{i}+[\mathbf{z}_{(i,j)}]_{j}=0$. 
\end{enumerate}
\end{prop}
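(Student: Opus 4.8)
The plan is to trace the finiteness of the dual value back to finiteness of each separable conjugate term, and then to read off the two support conditions directly from the conjugates of the two kinds of summands. First I would observe that in \eqref{eq:dist-dyk-Fenchel-dual} the term $\delta_{C_i}^*(z_i)$ is really shorthand for the full conjugate $\mathbf{f}_i^*(\mathbf{z}_i)$ of the function $\mathbf{f}_i$ in \eqref{eq:f-component}, while $\delta_{H_e}^*(\mathbf{z}_e)$ is the conjugate of $\mathbf{f}_e=\delta_{H_e}$. Since the quadratic part $\frac{1}{2}\|\bar{\mathbf{x}}\|^2-\frac{1}{2}\|\bar{\mathbf{x}}-\sum_\alpha\mathbf{z}_\alpha\|^2$ is bounded above by $\frac{1}{2}\|\bar{\mathbf{x}}\|^2$ for every $\mathbf{z}$, a point $\mathbf{z}$ can yield a finite objective value only if every conjugate $\mathbf{f}_\alpha^*(\mathbf{z}_\alpha)$ is finite; in particular this holds at any maximizer realizing the assumed finite optimal value. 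Thus it suffices to characterize the effective domains of $\mathbf{f}_i^*$ and $\mathbf{f}_e^*$. (This also forces each $C_i\neq\emptyset$, since $C_i=\emptyset$ would give $\delta_{C_i}^*\equiv-\infty$ and drive the dual value to $+\infty$.)

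For part (1), I would compute, writing $\langle\mathbf{z}_i,\mathbf{x}\rangle=\sum_{k\in V}\langle[\mathbf{z}_i]_k,[\mathbf{x}]_k\rangle$,
\[
\mathbf{f}_i^*(\mathbf{z}_i)=\sup_{\mathbf{x}}\Big[\langle\mathbf{z}_i,\mathbf{x}\rangle-\delta_{C_i}([\mathbf{x}]_i)\Big],
\]
and note that $\mathbf{f}_i$ constrains only the block $[\mathbf{x}]_i$. Fixing $[\mathbf{x}]_i\in C_i$ and, for some $j\neq i$ with $[\mathbf{z}_i]_j\neq0$, letting $[\mathbf{x}]_j=t[\mathbf{z}_i]_j$ with $t\to+\infty$ sends the supremum to $+\infty$; hence finiteness of $\mathbf{f}_i^*(\mathbf{z}_i)$ forces $[\mathbf{z}_i]_j=0$ for all $j\in V\setminus\{i\}$, and the remaining supremum collapses to $\delta_{C_i}^*([\mathbf{z}_i]_i)$, consistent with the notation in \eqref{eq:dist-dyk-Fenchel-dual}.

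For part (2), I would use that $H_e$ is a linear subspace, so $\delta_{H_e}^*$ is the indicator of the orthogonal complement $H_e^\perp$; thus $\delta_{H_e}^*(\mathbf{z}_e)$ is finite (and then equal to $0$) exactly when $\mathbf{z}_e\in H_e^\perp$. It remains to identify $H_e^\perp$ for $e=(i,j)$. Parametrizing $\mathbf{x}\in H_e$ by a common value $u:=[\mathbf{x}]_i=[\mathbf{x}]_j$ together with free blocks $[\mathbf{x}]_k$ for $k\neq i,j$, the requirement $\langle\mathbf{z}_e,\mathbf{x}\rangle=0$ for all such $\mathbf{x}$ becomes $\langle[\mathbf{z}_e]_i+[\mathbf{z}_e]_j,u\rangle+\sum_{k\neq i,j}\langle[\mathbf{z}_e]_k,[\mathbf{x}]_k\rangle=0$, which holds for every choice precisely when $[\mathbf{z}_e]_k=0$ for $k\in V\setminus\{i,j\}$ and $[\mathbf{z}_e]_i+[\mathbf{z}_e]_j=0$. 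These are exactly the asserted conditions.

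The content here is elementary, so I do not anticipate a serious obstacle; the only points requiring care are the bookkeeping in the orthogonal-complement computation for $H_e^\perp$ and the clean justification that finiteness of the overall maximum value, rather than finiteness imposed by hand, is what forces each separable conjugate term to be finite.
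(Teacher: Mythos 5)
Your proposal is correct and follows essentially the same route as the paper's own (sketched) proof: part (1) via the fact that $\mathbf{f}_i$ depends only on the $i$-th block, so finiteness of $\mathbf{f}_i^*(\mathbf{z}_i)$ kills the other blocks, and part (2) via $\delta_{H_e}^*=\delta_{H_e^\perp}$ together with an explicit computation of $H_e^\perp$. You merely supply the elementary details the paper defers to its reference.
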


\begin{proof}
The proof is elementary and exactly the same as that in \cite{Pang_Dist_Dyk}.
(Part (1) makes use of the fact that $\mathbf{f}_{i}(\cdot)$ depends
on only the $i$-th coordinate of the input, while part (2) makes
use of the fact that $\delta_{H_{(i,j)}}^{*}(\cdot)=\delta_{H_{(i,j)}^{\perp}}(\cdot)$,
and $\delta_{H_{(i,j)}^{\perp}}(\mathbf{z}_{(i,j)})<\infty$ if and
only if the conditions in (2) hold.)
\end{proof}
In view of Proposition \ref{prop:sparsity}, the vector $\mathbf{z}_{i}$
for all $i\in V$ are such that $[\mathbf{z}_{i}]_{j}=0$ if $j\neq i$.
Letting $z_{i}:=[\mathbf{z}_{i}]_{i}$, we let the dual function $F:[[\mathbb{R}^{m}]^{|V|}]^{|V\cup E|}\to\mathbb{R}$
be 
\begin{align}
F(\mathbf{z}) & :=\sum_{i\in V}\delta_{C_{i}}^{*}(z_{i})+\sum_{e\in E}\delta_{H_{e}}^{*}(\mathbf{z}_{e})+\frac{1}{2}\bigg\|\underbrace{\bar{\mathbf{x}}-\sum_{\alpha\in V\cup E}\mathbf{z}_{\alpha}}_{=:\mathbf{x}}\bigg\|^{2}.\label{eq:def-dual-F}
\end{align}
It is clear to see that $F(\mathbf{z})$ differs from \eqref{eq:dist-dyk-Fenchel-dual}
by a sign and a constant. It is known that strong duality between
\eqref{eq:general-framework} and \eqref{eq:dist-dyk-Fenchel-dual}
holds (even though a dual minimizer may not exist). Minimizing $F(\cdot)$
allows one to find the optimal value to \eqref{eq:dist-dyk-Fenchel-dual},
and also the optimal solution to \eqref{eq:general-framework}. It
turns out that the only variables that need to be tracked are $z_{i}\in\mathbb{R}^{m}$
for all $i\in V$ and $\mathbf{x}\in[\mathbb{R}^{m}]^{|V|}$ as marked
above. We shall prove that $\mathbf{x}$ converges linearly to the
optimal primal solution under some additional assumptions. We refer
to the $i$-th coordinate of $\mathbf{x}$ as $x_{i}$. Also, if $x^{*}$,
the projection of $\bar{x}$ onto $\cap_{i\in V}C_{i}$, were to be
zero, then $F(\mathbf{z})$ takes the minimum of zero when $\mathbf{x}$
is the primal optimal solution and $\{z_{i}\}_{i\in V}$ are optimal
multipliers. 

Here are the first set of assumptions we need to prove our linear
convergence result. 
\begin{assumption}
\label{assu:the-assu}Suppose that the following assumptions hold:
\begin{enumerate}
\item Let $x^{*}\in\mathbb{R}^{m}$ be the optimal solution to \eqref{eq:dist_opt_pblm}.
We assume that $x^{*}=0$.
\item \label{enu:all-bar-x-equal}The $\bar{x}_{i}$ are all equal for all
$i\in V$. 
\item (Existence of dual minimizers) There exists $\{z_{i}\}_{i\in V}$
such that $z_{i}\in N_{C_{i}}(x^{*})$ and $\sum_{i\in V}z_{i}=|V|\bar{x}$. 
\item \label{enu:lin-reg-C-i}(Regularity of the sets $C_{i}$) The sets
satisfy a nondegeneracy constraint qualification: There is a neighborhood
$U$ of $x^{*}$ and parameters $M_{\max}>1$ and $M_{\min}>0$ such
that if the multipliers $\{z_{i}\}_{i\in V}$ and points $\{x_{i}\}_{i\in V}$
are such that $x_{i}\in U$ and $z_{i}\in N_{C_{i}}(x_{i})$ for all
$i\in V$ and $[\bar{\mathbf{x}}-\sum_{\alpha\in V\cup E}\mathbf{z}_{\alpha}]_{j}\in U$
for all $j\in V$, then 
\begin{equation}
M_{\min}\leq\|z_{i}\|\leq M_{\max}\text{ for all }i\in V.\label{eq:norm-z-i-bdd}
\end{equation}
Let $H_{i}$ be the hyperplane $\{x:z_{i}^{T}(x-x_{i})=0\}$ for
all $i\in V$. Assume that for all $x\in\mathbb{R}^{m}$, there is
some constant $\kappa_{1}>0$ such that $d(x,\cap_{i\in V}H_{i})\leq\kappa_{1}\max_{i\in V}d(x,H_{i})$. 
\item (Graph connectedness) The (undirected) graph $G=(V,E)$ is connected.
\item (Semismoothness) The sets satisfy the semismoothness property of order
2 at $x^{*}$: For a point $x_{i}\in\partial C_{i}$ near $x^{*}$,
let a supporting hyperplane to $x_{i}$ at $C_{i}$ with normal $z_{i}\in N_{C_{i}}(x_{i})$
be $H_{i}$. Then we have $d(x^{*},H_{i})=O(\|x-x^{*}\|^{2})$. {[}We
know that all convex sets satisfy the property if $O(\|x-x^{*}\|^{2})$
were replaced by $o(\|x-x^{*}\|)$.{]} Suppose $z_{i}\in N_{C_{i}}(x_{i})$.
Since $d(x^{*},H_{i})=\hat{z}_{i}^{T}x_{i}$, there is a $\kappa_{2}>0$
such that
\begin{equation}
\begin{array}{c}
\delta_{C_{i}}^{*}(z_{i})=\langle z_{i},x_{i}\rangle=\|z_{i}\|\langle\frac{z_{i}}{\|z_{i}\|},x_{i}\rangle\leq\kappa_{2}\|z_{i}\|\|x_{i}\|^{2}._{\phantom{C_{i}}}^{\phantom{C_{i}}}\end{array}\label{eq:delta-star-has-squared-values}
\end{equation}
\item \label{enu:osc-of-norms-hat}(First order property on normals) There
is a neighborhood $U$ of $x^{*}$ and $\kappa_{3}>0$ such that for
all $i\in V$, if $x\in U\cap C_{i}$, and $z\in N_{C_{i}}(x)\backslash\{0\}$,
then there is a $z^{r}\in N_{C_{i}}(x^{*})\backslash\{0\}$ such that
\begin{equation}
\begin{array}{c}
\left\Vert \frac{z}{\|z\|}-\frac{z^{r}}{\|z^{r}\|}\right\Vert \leq\kappa_{3}\|x-x^{*}\|.\end{array}\label{eq:normals-linear-dec}
\end{equation}
\item \label{enu:lin-reg}(A linear regularity property on the normal cones)
Define the set $M\subset[\mathbb{R}^{m}]^{|V|}$ of optimal multipliers
to be $M:=M_{1}\cap M_{2}$, where \begin{subequations}\label{eq_m:set-M}
\begin{align}
M_{1}:= & \begin{array}{c}
N_{C_{1}}(x^{*})\times\cdots\times N_{C_{|V|}}^{\phantom{C_{|V|}}}(x^{*})\end{array}\label{eq:set-M1}\\
\text{and }M_{2}:= & \begin{array}{c}
\Big\{\mathbf{z}\in[\mathbb{R}^{m}]^{|V|}:\underset{i\in V}{\overset{\phantom{i\in V}}{\sum}}\mathbf{z}_{i}=|V|\bar{x}\Big\}.\end{array}\label{eq:set-M2}
\end{align}
\end{subequations}Assume there is a $\kappa_{4}>0$ such that 
\begin{equation}
d(\mathbf{z},M_{1}\cap M_{2})\leq\kappa_{4}d(\mathbf{z},M_{2})\text{ for all }\mathbf{z}\in M_{1}.\label{eq:lin-reg-ineq}
\end{equation}
\end{enumerate}
\end{assumption}

We remark about Assumption \ref{assu:the-assu}\eqref{enu:lin-reg}.
The linear regularity property is usually stated as $d(\mathbf{z},M_{1}\cap M_{2})\leq\kappa_{4}\max\{d(\mathbf{z},M_{1}),d(\mathbf{z},M_{2})\}$
for all $\mathbf{z}$, but we state a weaker version of it in Assumption
\ref{assu:the-assu}\eqref{enu:lin-reg} because that is what our
proof needs. The stronger linear regularity is satisfied whenever
the normal cones $N_{C_{i}}(x^{*})$ are polyhedral (see for example
\cite[Corollary 5.26]{BB96_survey}), so this assumption is quite
reasonable.

Assumption \ref{assu:the-assu}\eqref{enu:lin-reg-C-i} is stronger
than the usual transversality condition typically studied in the method
of alternating projections. Now that we are working with an optimization
problem \eqref{eq:dist_opt_pblm} rather than a feasibility problem,
it may be more appropriate to compare to the Robinson constraint qualification.
We seek to study this assumption further in future work.

We make the following remark.
\begin{rem}
\label{rem:WLOG-bar-x-equal}(On Assumption \ref{assu:the-assu}\eqref{enu:all-bar-x-equal})
We now show that Assumption \ref{assu:the-assu}\eqref{enu:all-bar-x-equal}
does not lose any generality. Suppose that the $\bar{x}_{i}$ are
not all necessarily the same. Note that $\sum_{i\in V}\frac{1}{2}\|x-\bar{x}_{i}\|^{2}=\sum_{i\in V}(\frac{1}{2}\|x-a\|^{2}+\frac{1}{2}\|\bar{x}_{i}\|^{2}-\frac{1}{2}\|a\|^{2})$,
where $a=\frac{1}{|V|}\sum_{i\in V}\bar{x}_{i}$. Thus all the $\bar{x}_{i}$
can be replaced by $a$. Note that this does not mean that the primal
iterate $\mathbf{x}$ needs to be such that all its coordinates are
$a$ at the start. 
\end{rem}

We now state Algorithm \vref{alg:Ext-Dyk}, which minimizes $F(\cdot)$
by block coordinate minimization.

\begin{algorithm}[!h]
\begin{lyxalgorithm}
\label{alg:Ext-Dyk}(Distributed Dykstra's algorithm) Our distributed
Dykstra's algorithm is as follows:

01$\quad$Let 

\begin{itemize}
\item $\mathbf{z}_{i}^{1,0}\in[\mathbb{R}^{m}]^{|V|}$ be a starting dual
vector for $\mathbf{f}_{i}(\cdot)$ for each $i\in V$ so that $[\mathbf{z}_{i}^{1,0}]_{j}=0$
for all $j\in V\backslash\{i\}$. 
\item $\mathbf{z}_{(i,j)}^{1,0}\in[\mathbb{R}^{m}]^{|V|}$ be a starting
dual vector for each edge $(i,j)$ so that $[\mathbf{z}_{(i,j)}]_{i}+[\mathbf{z}_{(i,j)}]_{j}=0$
and $[\mathbf{z}_{(i,j)}]_{i'}=0$ for all $i'\in V\backslash\{i,j\}$.
\end{itemize}
02$\quad$For $n=1,2,\dots$

03$\quad$$\quad$For $w=1,2,\dots,\bar{w}$

04$\quad$$\quad$$\quad$Choose a set $S_{n,w}\subset E\cup V$ such
that $S_{n,w}\neq\emptyset$. 

05$\quad$$\quad$$\quad$Define $\{z_{\alpha}^{n,w}\}_{\alpha\in S_{n,w}}$
by 
\begin{equation}
\{\mathbf{z}_{\alpha}^{n,w}\}_{\alpha\in S_{n,w}}=\underset{\mathbf{z}_{\alpha},\alpha\in S_{n,w}}{\arg\min}\frac{1}{2}\left\Vert \bar{\mathbf{x}}-\sum_{\alpha\notin S_{n,w}}\mathbf{z}_{\alpha}^{n,w-1}-\sum_{\alpha\in S_{n,w}}\mathbf{z}_{\alpha}\right\Vert ^{2}+\sum_{\alpha\in S_{n,w}}\mathbf{f}_{\alpha}^{*}(\mathbf{z}_{\alpha}).\label{eq:Dykstra-min-subpblm}
\end{equation}

06$\quad$$\quad$$\quad$Set $\mathbf{z}_{\alpha}^{n,w}:=\mathbf{z}_{\alpha}^{n,w-1}$
for all $\alpha\notin S_{n,w}$.

07$\quad$$\quad$End For 

08$\quad$$\quad$Let $\mathbf{z}_{\alpha}^{n+1,0}=\mathbf{z}_{\alpha}^{n,\bar{w}}$
for all $\alpha\in V\cup E$.

09$\quad$End For 
\end{lyxalgorithm}

\end{algorithm}

To provide some intuition to Algorithm \ref{alg:Ext-Dyk}, we mention
that minimizing only one $\mathbf{z}_{i}$ at a time for some $i\in V$
(i.e., $S_{n,k}=\{i\}$) reduces \eqref{eq:Dykstra-min-subpblm} to
a standard proximal problem. Minimizing only one $\mathbf{z}_{(i,j)}$
for some $(i,j)\in E$ (i.e., $S_{n,k}=\{(i,j)\}$) has the natural
interpretation of averaging the $i$-th and $j$-th components of
$\mathbf{x}$.

Let the function $\mathbf{f}_{e}:[\mathbb{R}^{m}]^{|V|}\to\mathbb{R}\cup\{\infty\}$
to be defined to be $\mathbf{f}_{e}(\cdot)=\delta_{H_{e}}(\cdot)$.
Let $\mathbf{x}^{*}$ be the optimal solution of \eqref{eq:general-framework}.
Before we prove the result, we note that using a technique in \cite{Gaffke_Mathar},
the duality gap between the primal and dual pair \eqref{eq:general-framework}
and \eqref{eq:dist-dyk-Fenchel-dual} satisfies 
\begin{eqnarray}
 &  & \begin{array}{c}
\frac{1}{2}\|\mathbf{x}^{*}-\bar{\mathbf{x}}\|^{2}+\underset{\alpha\in V\cup E}{\overset{\phantom{\alpha\in E}}{\sum}}\mathbf{f}_{\alpha}(\mathbf{x}^{*})-\frac{1}{2}\|\bar{\mathbf{x}}\|^{2}+\frac{1}{2}\bigg\|\bar{\mathbf{x}}-\underset{\alpha\in V\cup E}{\overset{\phantom{\alpha\in E}}{\sum}}\mathbf{z}_{\alpha}\bigg\|^{2}+\underset{\alpha\in V\cup E}{\overset{\phantom{\alpha\in E}}{\sum}}\mathbf{f}_{\alpha}^{*}(\mathbf{z}_{\alpha})\end{array}\nonumber \\
 & = & \begin{array}{c}
\frac{1}{2}\|\mathbf{x}^{*}-\bar{\mathbf{x}}\|^{2}+\underset{\alpha\in E\cup V}{\sum}[\mathbf{f}_{\alpha}(\mathbf{x}^{*})+\mathbf{f}_{\alpha}^{*}(\mathbf{z}_{\alpha})]-\left\langle \bar{\mathbf{x}},\underset{\alpha\in E\cup V}{\sum}\mathbf{z}_{\alpha}\right\rangle +\frac{1}{2}\left\Vert \underset{\alpha\in E\cup V}{\sum}\mathbf{z}_{\alpha}\right\Vert ^{2}\end{array}\nonumber \\
 & \overset{\scriptsize\mbox{Fenchel duality}}{\geq} & \begin{array}{c}
\frac{1}{2}\|\mathbf{x}^{*}-\bar{\mathbf{x}}\|^{2}+\left\langle \mathbf{x}^{*},\underset{\alpha\in E\cup V}{\sum}\mathbf{z}_{\alpha}\right\rangle -\left\langle \bar{\mathbf{x}},\underset{\alpha\in E\cup V}{\sum}\mathbf{z}_{\alpha}\right\rangle +\frac{1}{2}\left\Vert \underset{\alpha\in E\cup V}{\sum}\mathbf{z}_{\alpha}\right\Vert ^{2}\end{array}\nonumber \\
 & = & \begin{array}{c}
\frac{1}{2}\left\Vert \mathbf{x}^{*}-\bar{\mathbf{x}}+\underset{\alpha\in E\cup V}{\sum}\mathbf{z}_{\alpha}\right\Vert ^{2}\overset{\eqref{eq:def-dual-F}}{=}\frac{1}{2}\|\mathbf{x}^{*}-\mathbf{x}\|^{2}.\end{array}\label{eq:fenchel-duality-gap}
\end{eqnarray}
The strategy behind our linear convergence proof is to show that the
duality gap in the first line of \eqref{eq:fenchel-duality-gap} converges
linearly to zero, which will force the last formula of \eqref{eq:fenchel-duality-gap}
to converge linearly to zero, which in turn shows the linear convergence
of $\mathbf{x}$ to $\mathbf{x}^{*}$. Note that since $\mathbf{x}^{*}=0$,
$\mathbf{f}_{e}^{*}(\mathbf{z}_{e})=0$ throughout, and $\mathbf{f}_{\alpha}(\mathbf{x}^{*})=0$
for all $\alpha\in V\cup E$, the first line of \eqref{eq:fenchel-duality-gap}
can be simplified to be the $F(\mathbf{z})$ in \eqref{eq:def-dual-F}. 

We make another set of assumptions on Algorithm \ref{alg:Ext-Dyk}
that will allow us to prove our linear convergence result. 
\begin{assumption}
\label{assu:alg-assu}For Algorithm \ref{alg:Ext-Dyk}, we assume
that:
\begin{enumerate}
\item For all $\alpha\in V\cup E$ and $n\geq1$, there is a $w_{n,\alpha}$
such that $\alpha\in S_{n,w_{n,\alpha}}$. 
\item \label{enu:S-eq-V}$S_{n,1}=V$.
\end{enumerate}
\end{assumption}

Out plan is to prove the main result in Section \ref{sec:Main-result}
with Assumption \ref{assu:alg-assu}\eqref{enu:S-eq-V} first, then
remove it in Section \ref{sec:Lift-assu}.

\section{\label{sec:Main-result}Main result}

In this section, we state and prove the main theorem on linear convergence
of the distributed Dykstra's algorithm. Our proof is split into three
cases. For the first two cases, the proof in this section does not
rely on Assumption \ref{assu:alg-assu}\eqref{enu:S-eq-V}. For the
third case, we first prove our result by first assuming Assumption
\ref{assu:alg-assu}\eqref{enu:S-eq-V}. We then show how to lift
this assumption in Section \ref{sec:Lift-assu}.
\begin{thm}
\label{thm:the-thm}(Linear convergence of dual value) Suppose Assumptions
\ref{assu:the-assu} and \ref{assu:alg-assu} hold. For Algorithm
\ref{alg:Ext-Dyk}, there is a constant $r\in(0,1)$ such that $F(\mathbf{z}^{n+1,0})\leq rF(\mathbf{z}^{n-1,0})$.
Together with \eqref{eq:fenchel-duality-gap}, this implies that the
distance $\{\|x_{i}^{n,0}-x^{*}\|\}_{n\geq1}$ converges linearly
to zero for all $i\in V$. 
\end{thm}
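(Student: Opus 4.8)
The plan is to follow the standard two-ingredient template for the linear convergence of block coordinate (Dykstra-type) minimization: a \emph{sufficient decrease} estimate for each inner update, paired with an \emph{error bound} that controls the dual value by the total progress made over two consecutive sweeps. For the decrease, recall that the smooth part of $F$ in \eqref{eq:def-dual-F} is $\frac{1}{2}\|\mathbf{x}\|^{2}$ with $\mathbf{x}=\bar{\mathbf{x}}-\sum_{\alpha}\mathbf{z}_{\alpha}$, and that line 05 of Algorithm \ref{alg:Ext-Dyk} minimizes $F$ exactly over the block $S_{n,w}$. Eliminating the block variables reduces this to the minimization over $\mathbf{x}$ of $\frac{1}{2}\|\mathbf{x}\|^{2}$ plus an infimal convolution of the conjugates $\mathbf{f}_{\alpha}^{*}$, a convex function of $\mathbf{x}$, so the reduced objective is $1$-strongly convex in $\mathbf{x}$; hence each exact step yields
\[
F(\mathbf{z}^{n,w-1})-F(\mathbf{z}^{n,w})\geq\tfrac{1}{2}\|\mathbf{x}^{n,w}-\mathbf{x}^{n,w-1}\|^{2}.
\]
Summing over $w=1,\dots,\bar{w}$ and over the two outer indices shows that $F(\mathbf{z}^{n-1,0})-F(\mathbf{z}^{n+1,0})$ dominates a fixed multiple of the aggregate $\Sigma_{n}$ of the squared primal increments along the two sweeps. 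If I can also prove the error bound $F(\mathbf{z}^{n+1,0})\leq C\,\Sigma_{n}$, then $F(\mathbf{z}^{n+1,0})\leq C[F(\mathbf{z}^{n-1,0})-F(\mathbf{z}^{n+1,0})]$, which rearranges to $F(\mathbf{z}^{n+1,0})\leq\frac{C}{1+C}F(\mathbf{z}^{n-1,0})$, giving $r=\frac{C}{1+C}\in(0,1)$.

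The heart of the proof is the error bound. Because $x^{*}=0$, we have $\mathbf{f}_{e}^{*}(\mathbf{z}_{e})=0$ and $\delta_{C_{i}}^{*}(z_{i})\geq0$, so the dual value collapses to $F(\mathbf{z})=\sum_{i\in V}\delta_{C_{i}}^{*}(z_{i})+\frac{1}{2}\|\mathbf{x}\|^{2}$, and \eqref{eq:fenchel-duality-gap} identifies $F(\mathbf{z})$ with the duality gap. I would bound the two pieces separately. The conjugate terms are handled by the order-$2$ semismoothness bound \eqref{eq:delta-star-has-squared-values}, $\delta_{C_{i}}^{*}(z_{i})\leq\kappa_{2}\|z_{i}\|\|x_{i}\|^{2}$, together with $\|z_{i}\|\leq M_{\max}$ from \eqref{eq:norm-z-i-bdd}; this makes each conjugate second order in $\|x_{i}\|$, hence absorbable into the quadratic term. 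For the quadratic term I would use the orthogonal splitting $\frac{1}{2}\|\mathbf{x}\|^{2}=\frac{1}{2}d(\mathbf{x},D)^{2}+\frac{1}{2}\|P_{D}\mathbf{x}\|^{2}$ (with $P_{D}$ the projection onto $D$, legitimate since $D$ is a subspace and $\mathbf{x}^{*}=0\in D$). The consensus part $d(\mathbf{x},D)$ is driven to zero by the edge-averaging updates, whose increments are counted in $\Sigma_{n}$. For the feasibility part $\|P_{D}\mathbf{x}\|$, i.e.\ the common averaged value $x_{\mathrm{avg}}=\frac{1}{|V|}\sum_{i\in V}x_{i}$, I would replace each $C_{i}$ by a supporting hyperplane $H_{i}$ at the current iterate with normal $z_{i}\in N_{C_{i}}(x_{i})$: order-$2$ semismoothness gives $d(x^{*},H_{i})=O(\|x_{i}-x^{*}\|^{2})$, so the hyperplane model agrees with the set to second order, and the linear regularity hypothesis $d(x,\cap_{i\in V}H_{i})\leq\kappa_{1}\max_{i\in V}d(x,H_{i})$ of Assumption \ref{assu:the-assu}\eqref{enu:lin-reg-C-i} converts the individual residuals $d(x_{\mathrm{avg}},H_{i})$ into control of $d(x_{\mathrm{avg}},\cap_{i}H_{i})$, which lies near $x^{*}$. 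The first-order normal property \eqref{eq:normals-linear-dec} and the normal-cone linear regularity \eqref{eq:lin-reg-ineq} let me pass between the moving normals $z_{i}\in N_{C_{i}}(x_{i})$ and the reference cones $N_{C_{i}}(x^{*})$, so that the optimal-multiplier set $M$ of \eqref{eq_m:set-M} serves as the comparison object for the multiplier component.

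I expect the three-case split to reflect which residual dominates: a case where the consensus error $d(\mathbf{x},D)$ is the bottleneck (controlled purely by edge updates, independent of Assumption \ref{assu:alg-assu}\eqref{enu:S-eq-V}), a case where the feasibility residual $\|P_{D}\mathbf{x}\|$ is comparable and can be treated without ordering the sweep, and a borderline case in which one must guarantee a clean simultaneous projection onto all the sets at the start of the sweep, which is exactly why Assumption \ref{assu:alg-assu}\eqref{enu:S-eq-V} ($S_{n,1}=V$) is invoked there and removed later in Section \ref{sec:Lift-assu}. The main obstacle is the uniformity of the error bound: the supporting hyperplanes $H_{i}$ depend on the current iterate and move from sweep to sweep, so $\kappa_{1}$ must be applied to a \emph{moving} family of hyperplanes, while the second-order estimate $d(x^{*},H_{i})=O(\|x_{i}-x^{*}\|^{2})$ is small only when $\|x_{i}-x^{*}\|$ is small. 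Reconciling these forces all iterates into the neighborhood $U$ of $x^{*}$ and explains why only \emph{asymptotic} linear convergence is claimed; the bulk of the technical work is showing the hidden constants can be chosen uniformly over $U$. Once the decrease and error bound are combined as above, the linear rate for $F$ follows, and the linear convergence of $\{\|x_{i}^{n,0}-x^{*}\|\}$ is immediate from \eqref{eq:fenchel-duality-gap}, which yields $\tfrac{1}{2}\|\mathbf{x}-\mathbf{x}^{*}\|^{2}\leq F(\mathbf{z})$.
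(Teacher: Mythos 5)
Your overall template (exact block minimization gives $F(\mathbf{z}^{n,w-1})-F(\mathbf{z}^{n,w})\geq\frac{1}{2}\|\mathbf{x}^{n,w}-\mathbf{x}^{n,w-1}\|^{2}$, then show the dual value is dominated by the accumulated progress) is indeed the shape of the paper's argument, and you correctly identify several ingredients: the decomposition of $\frac{1}{2}\|\mathbf{x}\|^{2}$ into a consensus residual $d(\mathbf{x},D)$ (the paper's Case 2) and a feasibility residual carried by the averaged value $a$ (the paper's Case 3), the use of \eqref{eq:delta-star-has-squared-values} on the conjugate terms, and the reason $S_{n,1}=V$ is convenient. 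But there is a genuine gap at the heart of your error bound. Your plan for the feasibility residual is to pass to supporting hyperplanes $H_{i}$ and invoke the $\kappa_{1}$-regularity of Assumption \ref{assu:the-assu}\eqref{enu:lin-reg-C-i} to control $d(a,\cap_{i}H_{i})$. That argument only produces a large per-step movement when $a$ has a substantial component along the normals, i.e.\ when $\|P_{Z}a\|$ is large for $Z=\sspan(\{z_{i}^{p}\})$ (the paper's Case 3b, inequality \eqref{eq:use-lin-reg}). In the complementary regime $\|P_{Z}a\|^{2}\leq\theta_{Z}\|a\|^{2}$ the quantity $\max_{i}d(a,H_{i,0})$ is itself small, so $\kappa_{1}$-regularity only tells you $a$ is near the affine set $\cap_{i}H_{i}$, which contains points far from $x^{*}=0$; it does not force any projection to move, and your error bound $F(\mathbf{z}^{n+1,0})\leq C\Sigma_{n}$ cannot be closed this way. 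The paper's Case 3a handles exactly this regime by a different mechanism you do not mention: since $\sum_{i\in V}(x_{i}^{0}+z_{i}^{0})=|V|\bar{x}\in Z$ and $d=P_{Z^{\perp}}a$, identity \eqref{eq:d-x-z-eq-0} forces some $i$ with $\hat{d}^{T}(x_{i}^{0}+z_{i}^{0})\leq0$ while $\hat{d}^{T}x_{i}^{0}$ is comparable to $\|a\|$, so the normal $z_{i}^{0}$ tilts strongly against $d$ and the projection onto the two-constraint outer polyhedron $P_{i}$ of \eqref{eq:poly-def-P-i} must travel a distance proportional to $\|x_{i}^{0}\|$ (the Claim, with its three subcases). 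This is the technical core of the proof and it is absent from your plan.

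A second, smaller gap: you propose to absorb the conjugate terms $\delta_{C_{i}}^{*}(z_{i}^{0})$ into the quadratic term via \eqref{eq:delta-star-has-squared-values}. At the start of a sweep, however, $z_{i}^{0}$ is a normal at the iterate from the \emph{previous} sweep, so \eqref{eq:delta-star-has-squared-values} bounds $\delta_{C_{i}}^{*}(z_{i}^{0})$ by the square of that earlier iterate's norm, not by $\|x_{i}^{0}\|^{2}$; the conjugate terms can therefore dominate $\|\mathbf{x}^{0}\|^{2}$. The paper devotes its Case 1 to this situation, showing that either the conjugate term collapses after the next projection (Case 1a, a decrease of $F$ not captured by squared movement at all, which already strains your ``decrease $\geq c\Sigma_{n}$ plus $F\leq C\Sigma_{n}$'' bookkeeping) or the iterate must move by a definite amount (Case 1b). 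Without Case 1 and without the Case 3a construction, the proposal does not yield the theorem.
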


We need positive parameters $\bar{\epsilon}$, $\theta_{D}$ and $\theta_{Z}$
to be small enough so that they satisfy $\bar{\epsilon}|V|(2\kappa_{2}M_{\max}+1)\leq\frac{1}{4}$,
$c_{2}(\theta_{Z},\theta_{D},0)>0$ and \eqref{eq_m:3-cond}, where
$c(\cdot)$ and $c_{2}(\cdot)$ are defined in \eqref{eq:d-T-x} and
\eqref{eq:last-case}, and the other constants are described in Assumption
\ref{assu:the-assu} and in the course of the proof. It is easy to
see that the parameters $\bar{\epsilon}$, $\theta_{D}$ and $\theta_{Z}$
can be chosen to satisfy these conditions. 

The first two cases of the proof of Theorem \ref{thm:the-thm} are
easier than the third case. To simplify notation, we let $\mathbf{z}_{\alpha}^{n,w}$
to be written simply as $\mathbf{z}_{\alpha}^{w}$ for all $w\in\{0,1,\dots,\bar{w}\}$
and $\alpha\in V\cup E$, and the dropping of ``$n$'' appears in
all other variables as well. Let $x_{i}^{w}$ be the $i$-th coordinate
of $\mathbf{x}^{w}$, and let $z_{i}^{w}$ be $[\mathbf{z}_{i}^{w}]_{i}$,
the $i$-th coordinate of $\mathbf{z}_{i}^{w}$. If $S_{n,k}=\{i\}$
for some $i\in V$, then $x_{i}^{k}$ and $z_{i}^{k}$ are the solutions
to the primal dual pair of subproblems\begin{subequations} 
\begin{align}
 & \begin{array}{c}
\underset{x}{\overset{\phantom{x}}{\min}}\frac{1}{2}\|(x_{i}^{k-1}+z_{i}^{k-1})-x\|^{2}+\delta_{C_{i}}(x)\end{array}\label{eq:PD-primal}\\
\text{ and } & \begin{array}{c}
\underset{z}{\overset{\phantom{z}}{\max}}\frac{1}{2}\|x_{i}^{k-1}+z_{i}^{k-1}\|^{2}-\frac{1}{2}\|(x_{i}^{k-1}+z_{i}^{k-1})-z\|^{2}-\delta_{C_{i}}^{*}(z).\end{array}\label{eq:PD-dual}
\end{align}
\end{subequations}By adjusting \eqref{eq:PD-dual}, we can easily
check that $z_{i}^{k}$ is the minimizer of 
\begin{equation}
\begin{array}{c}
\underset{z}{\overset{\phantom{x}}{\min}}\frac{1}{2}\|(x_{i}^{k-1}+z_{i}^{k-1})-z\|^{2}+\delta_{C_{i}}^{*}(z).\end{array}\label{eq:dual-min-form}
\end{equation}

\begin{proof}
[Proof of cases 1 and 2 of Theorem \ref{thm:the-thm}]The proof is
split into 3 cases:

\textbf{Case 1: $\|\mathbf{x}^{0}\|^{2}\leq\bar{\epsilon}\sum_{i\in V}\delta_{C_{i}}^{*}(z_{i}^{0})$}

Let $i^{*}$ be $\arg\max_{i\in V}\{\delta_{C_{i}}^{*}(z_{i}^{0})\}$.
We have 
\begin{equation}
\begin{array}{c}
\bar{\epsilon}|V|\delta_{C_{i^{*}}}^{*}(z_{i^{*}}^{0})\geq\bar{\epsilon}\underset{i\in V}{\overset{\phantom{i\in V}}{\sum}}\delta_{C_{i}}^{*}(z_{i}^{0})\overset{\scriptsize{\text{Case 1}}}{\geq}\|\mathbf{x}^{0}\|^{2}.\end{array}\label{eq:index-large-delta-star}
\end{equation}
Also, 
\begin{equation}
-\Big(1+\frac{\bar{\epsilon}}{2}\Big)\delta_{C_{i^{*}}}^{*}(z_{i^{*}}^{0})\overset{\scriptsize{\text{Case 1,\eqref{eq:index-large-delta-star}}}}{\leq}-\frac{1}{|V|}\Big(\frac{1}{2}\|\mathbf{x}^{0}\|^{2}+\sum_{i\in V}\delta_{C_{i}}^{*}(z_{i}^{0})\Big)\overset{\eqref{eq:def-dual-F}}{=}-\frac{1}{|V|}F(\mathbf{z}^{0}).\label{eq:case-1-all}
\end{equation}
We can assume that at index $k$, we have $S_{n,k}=\{i^{*}\}$ and
$i^{*}\notin S_{n,k'}$ for all $k'<k$. We have 2 cases. 

\textbf{Case 1a: $\|x_{i^{*}}^{k}\|^{2}\leq\frac{1}{2\kappa_{2}M_{\max}+1}\delta_{C_{i^{*}}}^{*}(z_{i^{*}}^{0})$,}

In this case, 
\begin{eqnarray}
 &  & \begin{array}{c}
\delta_{C_{i^{*}}}^{*}(z_{i^{*}}^{k})+\frac{1}{2}\|x_{i^{*}}^{k}\|^{2}\overset{\eqref{eq:delta-star-has-squared-values},z_{i^{*}}^{k}\in N_{C_{i}}(x_{i^{*}}^{k})}{\underset{\phantom{\eqref{eq:norm-z-i-bdd}}}{\leq}}\left(\kappa_{2}\|z_{i^{*}}^{k}\|+\frac{1}{2}\right)\|x_{i^{*}}^{k}\|^{2}\end{array}\nonumber \\
 & \overset{\eqref{eq:norm-z-i-bdd}}{\underset{\phantom{\eqref{eq:norm-z-i-bdd}}}{\leq}} & \begin{array}{c}
\left(\kappa_{2}M_{\max}+\frac{1}{2}\right)\|x_{i^{*}}^{k}\|^{2}\overset{\scriptsize{\text{Case 1a}}}{\underset{\phantom{\eqref{eq:norm-z-i-bdd}}}{\leq}}\frac{1}{2}\delta_{C_{i^{*}}}^{*}(z_{i^{*}}^{0})\leq\frac{1}{2}\delta_{C_{i^{*}}}^{*}(z_{i^{*}}^{0})+\frac{1}{2}\|x_{i^{*}}^{k-1}\|^{2}.\end{array}\label{eq:case-1a-1}
\end{eqnarray}
 Recall that $z_{i^{*}}^{0}=z_{i^{*}}^{k-1}$. Since $S_{n,k}=\{i^{*}\}$,
we also have $z_{i}^{k}=z_{i}^{k-1}$ for all $i\neq i^{*}$ and $x_{i}^{k}=x_{i}^{k-1}$
for all $i\neq i^{*}$. We have 
\begin{eqnarray}
 &  & \begin{array}{c}
F(\mathbf{z}^{k})\overset{\eqref{eq:def-dual-F}}{=}\underset{i\in V}{\overset{\phantom{i\in V}}{\sum}}\left(\delta_{C_{i}}^{*}(z_{i}^{k})+\frac{1}{2}\|x_{i}^{k}\|^{2}\right)\end{array}\label{eq:lin-rate-case-1a}\\
 & \leq & \begin{array}{c}
\underset{i\neq i^{*}}{\overset{\phantom{i\in V}}{\sum}}\left(\delta_{C_{i}}^{*}(z_{i}^{k})+\frac{1}{2}\|x_{i}^{k}\|^{2}\right)+\frac{1}{2}\delta_{C_{i^{*}}}^{*}(z_{i^{*}}^{0})+\frac{1}{2}\|x_{i^{*}}^{k-1}\|^{2}\end{array}\nonumber \\
 & \overset{\eqref{eq:def-dual-F},\eqref{eq:case-1a-1}}{\leq} & \begin{array}{c}
F(\mathbf{z}^{k-1})-\frac{1}{2}\delta_{C_{i^{*}}}^{*}(z_{i^{*}}^{0})\overset{\eqref{eq:case-1-all},F(\mathbf{z}^{k-1})\leq F(\mathbf{z}^{0})}{\underset{\phantom{\eqref{eq:case-1-all}}}{\leq}}\left(1-\frac{1}{|V|(2+\bar{\epsilon})}\right)F(\mathbf{z}^{0}).\end{array}\nonumber 
\end{eqnarray}

\textbf{Case 1b: $\|x_{i^{*}}^{k}\|^{2}\geq\frac{1}{2\kappa_{2}M_{\max}+1}\delta_{C_{i^{*}}}^{*}(z_{i^{*}}^{0})$}

Note that $\frac{1}{2}\|x_{i^{*}}^{k}-x_{i^{*}}^{0}\|^{2}$ is an
estimate of the decrease of the dual objective value. We choose $\bar{\epsilon}>0$
so that $\bar{\epsilon}|V|(2\kappa_{2}M_{\max}+1)\leq\frac{1}{4}$.
We have 
\begin{equation}
\begin{array}{c}
\|x_{i^{*}}^{0}\|^{2}\overset{\eqref{eq:index-large-delta-star}}{\leq}\bar{\epsilon}|V|\delta_{C_{i^{*}}}^{*}(z_{i^{*}}^{0})\overset{\scriptsize{\text{Case 1b}}}{\underset{\phantom{\scriptsize{\text{Case 1b}}}}{\leq}}\bar{\epsilon}|V|(2\kappa_{2}M_{\max}+1)\|x_{i^{*}}^{k}\|^{2}\leq\frac{1}{4}\|x_{i^{*}}^{k}\|^{2}.\end{array}\label{eq:case-1b-0}
\end{equation}
We then have 
\begin{equation}
\begin{array}{c}
\|x_{i^{*}}^{k}-x_{i^{*}}^{0}\|^{2}\geq(\|x_{i^{*}}^{k}\|-\|x_{i^{*}}^{0}\|)^{2}\overset{\eqref{eq:case-1b-0}}{\underset{\phantom{\eqref{eq:case-1b-0}}}{\geq}}\frac{1}{4}\|x_{i^{*}}^{k}\|^{2}\overset{\scriptsize{\text{Case 1b}}}{\underset{\phantom{\scriptsize{\text{Case 1b}}}}{\geq}}\frac{1}{4(2\kappa_{2}M_{\max}+1)}\delta_{C_{i^{*}}}^{*}(z_{i^{*}}^{0}).\end{array}\label{eq:case-1b-1}
\end{equation}
Then 
\begin{equation}
\begin{array}{c}
\underset{k'=1}{\overset{k}{\sum}}\|x_{i^{*}}^{k'}-x_{i^{*}}^{k'-1}\|^{2}\geq\frac{1}{k}\bigg(\underset{k'=1}{\overset{k}{\sum}}\|x_{i^{*}}^{k'}-x_{i^{*}}^{k'-1}\|\bigg)^{2}\geq\frac{1}{\bar{w}}\|x_{i^{*}}^{0}-x_{i^{*}}^{k}\|^{2}.\end{array}\label{eq:intermediate-ineq}
\end{equation}
We then have
\begin{eqnarray}
F(\mathbf{z}^{k}) & \overset{\eqref{eq:def-dual-F}}{=} & \begin{array}{c}
\underset{i\in V}{\overset{\phantom{i\in V}}{\sum}}\delta_{C_{i}}^{*}(z_{i}^{k})+\frac{1}{2}\|\mathbf{x}^{k}\|^{2}\end{array}\nonumber \\
 & \overset{\eqref{eq:Dykstra-min-subpblm}}{\underset{\phantom{\eqref{eq:Dykstra-min-subpblm}}}{\leq}} & \begin{array}{c}
\underset{i\in V}{\overset{\phantom{i\in V}}{\sum}}\delta_{C_{i}}^{*}(z_{i}^{0})+\frac{1}{2}\|\mathbf{x}^{0}\|^{2}-\underset{k'=1}{\overset{k}{\sum}}\frac{1}{2}\|x_{i^{*}}^{k'}-x_{i^{*}}^{k'-1}\|^{2}\end{array}\nonumber \\
 & \overset{\eqref{eq:def-dual-F},\eqref{eq:case-1b-1},\eqref{eq:intermediate-ineq}}{\leq} & \begin{array}{c}
F(\mathbf{z}^{0})-\frac{1}{8\bar{w}(2\kappa_{2}M_{\max}+1)}\delta_{C_{i^{*}}}^{*}(z_{i^{*}}^{0})\end{array}\nonumber \\
 & \overset{\eqref{eq:case-1-all}}{\leq} & \begin{array}{c}
\left(1-\frac{1}{4\bar{w}(2\kappa_{2}M_{\max}+1)(2+\bar{\epsilon})|V|}\right)F(\mathbf{z}^{0}).\end{array}\label{eq:lin-rate-case-1b}
\end{eqnarray}

\textbf{Case 2: $\|\mathbf{x}^{0}\|^{2}\geq\bar{\epsilon}\sum_{i\in V}\delta_{C_{i}}^{*}(z_{i}^{0})$,
and $\|P_{D^{\perp}}\mathbf{x}^{0}\|^{2}\geq\theta_{D}\|\mathbf{x}^{0}\|^{2}$. }

In this case, note that $d(\mathbf{x}^{0},D)=\|P_{D^{\perp}}\mathbf{x}^{0}\|\overset{\scriptsize{\text{Case 2}}}{\geq}\sqrt{\theta_{D}}\|\mathbf{x}^{0}\|$.
Since $D\overset{\eqref{eq:def-D-hyperplane}}{=}\cap_{e\in E}H_{e}$
and $H_{e}$ are hyperplanes, there is some $\kappa_{D}>0$ such that
$\max_{e\in E}d(\mathbf{x}^{0},H_{e})\geq\frac{1}{\kappa_{D}}d(\mathbf{x}^{0},D)$.
Let $e^{*}$ be such that $d(\mathbf{x}^{0},H_{e^{*}})=\max_{e\in E}d(\mathbf{x}^{0},H_{e})$,
and let $k$ be such that $\mathbf{x}^{k}\in H_{e^{*}}$, which exists
by Assumption \ref{assu:alg-assu}(1). We then have 
\begin{equation}
\begin{array}{c}
\|\mathbf{x}^{0}-\mathbf{x}^{k}\|\overset{\mathbf{x}^{k}\in H_{e^{*}}}{\geq}d(\mathbf{x}^{0},H_{e^{*}})\geq\frac{1}{\kappa_{D}}d(\mathbf{x}^{0},D)\overset{\scriptsize{\text{Case 2}}}{\underset{\phantom{\scriptsize{\text{Case 2}}}}{\geq}}\frac{1}{\kappa_{D}}\sqrt{\theta_{D}}\|\mathbf{x}^{0}\|.\end{array}\label{eq:case-2-ineq-chain}
\end{equation}
Now, 
\begin{equation}
\begin{array}{c}
\left(\frac{1}{2}+\frac{1}{\bar{\epsilon}}\right)\|\mathbf{x}^{0}\|^{2}\overset{\scriptsize{\text{Case 2}}}{\underset{\phantom{\scriptsize{\text{Cas}}}}{\geq}}\frac{1}{2}\|\mathbf{x}^{0}\|^{2}+\underset{i\in V}{\overset{\phantom{i\in V}}{\sum}}\delta_{C_{i}}^{*}(z_{i}^{0})\overset{\eqref{eq:def-dual-F}}{=}F(\mathbf{z}^{0}).\end{array}\label{eq:case-2-a}
\end{equation}
We have $\frac{1}{2}\sum_{i=1}^{k}\|\mathbf{x}^{i}-\mathbf{x}^{i-1}\|^{2}\geq\frac{1}{2w}\left(\sum_{i=1}^{k}\|\mathbf{x}^{i}-\mathbf{x}^{i-1}\|\right)^{2}\geq\frac{1}{2w}\|\mathbf{x}^{0}-\mathbf{x}^{k}\|^{2}$,
so 
\begin{eqnarray}
F(\mathbf{z}^{k}) & \overset{\eqref{eq:Dykstra-min-subpblm}}{\leq} & \begin{array}{c}
F(\mathbf{z}^{0})-\frac{1}{2}\underset{i=1}{\overset{k}{\sum}}\|\mathbf{x}^{i}-\mathbf{x}^{i-1}\|^{2}\leq F(\mathbf{z}^{0})-\frac{1}{2w}\|\mathbf{x}^{0}-\mathbf{x}^{k}\|^{2}\end{array}\label{eq:case-2-end}\\
 & \overset{\eqref{eq:case-2-ineq-chain}}{\underset{\phantom{\scriptsize{\text{Cas}}}}{\leq}} & \begin{array}{c}
F(\mathbf{z}^{0})-\frac{1}{2w\kappa_{D}^{2}}\theta_{D}\|\mathbf{x}^{0}\|^{2}\overset{\eqref{eq:case-2-a}}{\underset{\phantom{\scriptsize{\text{Cas}}}}{\leq}}\left(1-\frac{1}{w\kappa_{D}^{2}}\theta_{D}\frac{\bar{\epsilon}}{\bar{\epsilon}+2}\right)F(\mathbf{z}^{0}).\end{array}\nonumber 
\end{eqnarray}
Hence we are done.
\end{proof}
This leaves us with Case 3, i.e., 

\textbf{Case 3: $\|\mathbf{x}^{0}\|^{2}\geq\bar{\epsilon}\sum_{i\in V}\delta_{C_{i}}^{*}(z_{i}^{0})$,
and $\|P_{D^{\perp}}\mathbf{x}^{0}\|^{2}\leq\theta_{D}\|\mathbf{x}^{0}\|^{2}$. }

By the definition of $D$ in \eqref{eq:def-D-hyperplane}, all $|V|$
components of $P_{D}\mathbf{x}^{0}$ are equal to some value, which
we call $a$. Then we have the inequalities 
\begin{align}
 & \begin{array}{c}
\|P_{D}\mathbf{x}^{0}\|^{2}=\|\mathbf{x}^{0}\|^{2}-\|P_{D^{\perp}}\mathbf{x}^{0}\|^{2}\overset{\scriptsize{\text{Case 3}}}{\geq}(1-\theta_{D})\|\mathbf{x}^{0}\|^{2},\end{array}\label{eq:case-3-0}\\
 & \begin{array}{c}
\underset{i\in V}{\overset{\phantom{i\in V}}{\sum}}\|x_{i}^{0}-a\|^{2}=\|P_{D^{\perp}}\mathbf{x}^{0}\|^{2}\overset{\scriptsize{\text{Case 3}}}{\leq}\theta_{D}\|\mathbf{x}^{0}\|^{2}\overset{\eqref{eq:case-3-0}}{\leq}\frac{\theta_{D}}{1-\theta_{D}}\|P_{D}\mathbf{x}^{0}\|^{2}=\frac{\theta_{D}|V|}{1-\theta_{D}}\|a\|^{2},\end{array}\label{eq:case-3-1}\\
 & \begin{array}{c}
|V|\|a\|^{2}=\|P_{D}\mathbf{x}^{0}\|^{2}\leq\|\mathbf{x}^{0}\|.\end{array}\label{eq:case-3-2}
\end{align}
We have \begin{subequations}\label{eq_m:x-compare-a}
\begin{align}
 & \begin{array}{c}
\|x_{i}^{0}\|\leq\phantom{\big|}\|a\|+\|x_{i}^{0}-a\|\phantom{\big|}\overset{\eqref{eq:case-3-1}}{\underset{\phantom{\eqref{eq:case-3-1}}}{\leq}}\left(1+\sqrt{\frac{\theta_{D}}{1-\theta_{D}}|V|}\right)\|a\|.\end{array}\label{eq:x-leq-a}\\
\text{and } & \begin{array}{c}
\|x_{i}^{0}\|\geq\big|\|a\|-\|x_{i}^{0}-a\|\big|\overset{\eqref{eq:case-3-1}}{\underset{\phantom{\eqref{eq:case-3-1}}}{\geq}}\left(1-\sqrt{\frac{\theta_{D}}{1-\theta_{D}}|V|}\right)\|a\|.\end{array}\label{eq:x-geq-a}
\end{align}
\end{subequations} and 
\begin{equation}
\begin{array}{c}
\|a-x_{i}^{0}\|\overset{\eqref{eq:case-3-1}}{\leq}\sqrt{\frac{\theta_{D}}{1-\theta_{D}}|V|}\|a\|\overset{\eqref{eq:x-geq-a}}{\underset{\phantom{\eqref{eq:x-geq-a}}}{\leq}}\left(1-\sqrt{\frac{\theta_{D}}{1-\theta_{D}}|V|}\right)^{-1}\sqrt{\frac{\theta_{D}}{1-\theta_{D}}|V|}\|x_{i}^{0}\|.\end{array}\label{eq:a-minus-x}
\end{equation}

We now show that there is a constant $\tilde{\kappa}_{3}>0$ such
that $\|\hat{z}_{i}^{0}-\hat{z}_{i}^{r}\|\leq\tilde{\kappa}_{3}\|x_{i}^{0}\|$.
We have $\|\hat{z}_{i}^{0}-\hat{z}_{i}^{r}\|\overset{\eqref{eq:normals-linear-dec}}{\leq}\kappa_{3}\|\tilde{x}_{i}\|$,
where $\tilde{x}_{i}:=x_{i}^{n-1,p(n-1,i)}$, and $p(n-1,i)$ is the
index such that $i\notin S_{n-1,k}$ for all $k$ such that $p(n-1,i)<k\leq\bar{w}$.
If we have $\|\hat{z}_{i}^{0}-\hat{z}_{i}^{r}\|>\tilde{\kappa}_{3}\|x_{i}^{0}\|$,
then 
\begin{eqnarray*}
 &  & \begin{array}{c}
F(\mathbf{z}^{n-1,p(n-1.i)})\geq\frac{1}{2}\|\tilde{x}_{i}\|^{2}\overset{\eqref{eq:normals-linear-dec}}{\geq}\frac{1}{2\kappa_{3}^{2}}\|\hat{z}_{i}^{0}-\hat{z}_{i}^{r}\|^{2}\geq\frac{\tilde{\kappa}_{3}^{2}}{2\kappa_{3}^{2}}\|x_{i}^{0}\|^{2}\end{array}\\
 & \overset{\eqref{eq:x-geq-a}}{\geq} & \begin{array}{c}
\frac{\tilde{\kappa}_{3}^{2}}{2\kappa_{3}^{2}}\left(1-\sqrt{\frac{\theta_{D}|V|}{1-\theta_{D}}}\right)^{2}\|a\|^{2}\end{array}\\
 & \overset{\eqref{eq:case-3-1}}{\geq} & \begin{array}{c}
\frac{\tilde{\kappa}_{3}^{2}}{2\kappa_{3}^{2}}\left(1-\sqrt{\frac{\theta_{D}|V|}{1-\theta_{D}}}\right)^{2}\frac{1-\theta_{D}}{|V|}\|\mathbf{x}^{0}\|^{2}\overset{\eqref{eq:case-2-a}}{\geq}\frac{\tilde{\kappa}_{3}^{2}}{\kappa_{3}^{2}}\left(1-\sqrt{\frac{\theta_{D}|V|}{1-\theta_{D}}}\right)^{2}\frac{1-\theta_{D}}{|V|}\frac{\bar{\epsilon}}{2+\bar{\epsilon}}F(\mathbf{z}^{0}).\end{array}
\end{eqnarray*}
This would then give us $F(\mathbf{z}^{n-1,0})\geq\frac{\tilde{\kappa}_{3}^{2}}{\kappa_{3}^{2}}\left(1-\sqrt{\frac{\theta_{D}|V|}{1-\theta_{D}}}\right)^{2}\frac{1-\theta_{D}}{|V|}\frac{\bar{\epsilon}}{2+\bar{\epsilon}}F(\mathbf{z}^{n+1,0})$.
The parameter $\tilde{\kappa}_{3}$ can be chosen large enough so
that the coefficient of $F(\mathbf{z}^{n+1,0})$ is greater than $1$,
which once again leads to the conclusion in Theorem \ref{thm:the-thm}.
Therefore, we shall assume 
\begin{equation}
\|\hat{z}_{i}^{0}-\hat{z}_{i}^{r}\|\leq\tilde{\kappa}_{3}\|x_{i}^{0}\|\label{eq:bdd-by-tilde-kappa}
\end{equation}
 throughout. We now assume Assumption \ref{assu:alg-assu}\eqref{enu:S-eq-V},
and let $x_{i}^{+}$ and $z_{i}^{+}$ be $x_{i}^{1}$ and $z_{i}^{1}$
respectively. 
\begin{proof}
[Proof of case 3 of Theorem \ref{thm:the-thm}]We consider $\{z_{i}^{0}\}_{i\in V}$
and $\{z_{i}^{r}\}_{i\in V}$, where $z_{i}^{r}=P_{N_{C_{i}}(x^{*})}(z_{i}^{0})$.
Recall $M\subset[\mathbb{R}^{m}]^{|V|}$ defined as the set of optimal
multipliers defined in Assumption \ref{assu:the-assu}\eqref{enu:lin-reg}.
Let $(z_{1}^{p},\dots,z_{|V|}^{p})\in[\mathbb{R}^{m}]^{|V|}$ be 
\begin{equation}
(z_{1}^{p},\dots,z_{|V|}^{p})=P_{M}\big((z_{1}^{r},\dots,z_{|V|}^{r})\big),\label{eq:z-p-def-proj}
\end{equation}
where $(z_{1}^{r},\dots,z_{|V|}^{r})\in[\mathbb{R}^{m}]^{|V|}$. Let
$Z$ be $\sspan(\{z_{i}^{p}\}_{i\in V})$. Let $d$ be the direction
$P_{Z^{\perp}}a$. There are two subcases to consider. 

\textbf{Case 3a:} $\|P_{Z}a\|^{2}\leq\theta_{Z}\|a\|^{2}$ 

Since $d=P_{Z^{\perp}}a\in Z^{\perp}$, we have 
\begin{equation}
d\perp\hat{z}_{i}^{p}\text{ for all }i\in V.\label{eq:d-perp-z}
\end{equation}
We would be projecting $x_{i}^{0}+z_{i}^{0}$ onto $C_{i}$ for all
$i\in V$. Let an outer approximate of $C_{i}$ be 
\begin{equation}
P_{i}:=\{x:(\hat{z}_{i}^{0})^{T}x\leq\epsilon_{i},(z_{i}^{p})^{T}x\leq0\},\text{ where }\epsilon_{i}:=\delta_{C_{i}}^{*}(\hat{z}_{i}^{0}).\label{eq:poly-def-P-i}
\end{equation}
Since $C_{i}\subset P_{i}$, we have $\delta_{P_{i}}(\cdot)\leq\delta_{C_{i}}(\cdot)$,
and so $\delta_{P_{i}}^{*}(\cdot)\geq\delta_{C_{i}}^{*}(\cdot)$.
By the design of $P_{i}$, we have $\delta_{P_{i}}^{*}(z_{i}^{0})=\delta_{C_{i}}^{*}(z_{i}^{0})$.
Since $d\in Z^{\perp}$ and $\bar{x}\in Z$, we have $d^{T}\bar{x}=0$.
Proposition \ref{prop:sparsity}(2) implies that $\sum_{i\in V}\sum_{\alpha\in E}[\mathbf{z}_{\alpha}^{0}]_{i}=0$.
So we have 
\begin{equation}
\begin{array}{c}
\hat{d}^{T}\underset{i\in V}{\overset{\phantom{i\in V}}{\sum}}(x_{i}^{0}+z_{i}^{0})\overset{\scriptsize{\text{Prop \ref{prop:sparsity}}(2)}}{\underset{\phantom{\scriptsize{\text{Prop \ref{prop:sparsity}}}}}{=}}\hat{d}^{T}\underset{i\in V}{\overset{\phantom{i\in V}}{\sum}}(x_{i}^{0}+[\mathbf{z}_{i}^{0}]_{i}+\underset{\alpha\in E}{\overset{\phantom{i\in V}}{\sum}}[\mathbf{z}_{\alpha}^{0}]_{i})\overset{\eqref{eq:def-dual-F}}{=}\hat{d}^{T}\underset{i\in V}{\overset{\phantom{i\in V}}{\sum}}\bar{x}=0.\end{array}\label{eq:d-x-z-eq-0}
\end{equation}
Hence there is some $i$ such that 
\begin{equation}
\hat{d}^{T}(x_{i}^{0}+z_{i}^{0})\overset{\eqref{eq:d-x-z-eq-0}}{\leq}0.\label{eq:d-x-z-leq-0}
\end{equation}
Then we move ahead with this $i$ (without labeling it as $i^{*}$
to save notation). 

Since $d=P_{Z^{\perp}}a$, we have $d^{T}a=a^{T}P_{Z^{\perp}}a=a^{T}P_{Z^{\perp}}P_{Z^{\perp}}a=\|P_{Z^{\perp}}a\|^{2}$.
Note that \begin{subequations}\label{eq_m:d-and-d-minus-a}
\begin{align}
 & \|d\|^{2}=\|P_{Z^{\perp}}a\|^{2}=\|a\|^{2}-\|P_{Z}a\|^{2}\overset{\scriptsize{\text{Case 3a}}}{\geq}(1-\theta_{Z})\|a\|^{2}\label{eq:d-geq-a-norms}\\
\text{and } & \|d-a\|^{2}=\|P_{Z}a\|^{2}\overset{\scriptsize{\text{Case 3a}}}{\leq}\theta_{Z}\|a\|^{2},\label{eq:d-minus-a-leq-a-norms}
\end{align}
\end{subequations} so 
\begin{eqnarray}
d^{T}x_{i}^{0} & = & d^{T}a+d^{T}(x_{i}^{0}-a)\geq\|P_{Z^{\perp}}a\|^{2}-\|d\|\|x_{i}^{0}-a\|\label{eq:d-T-x}\\
 & \overset{\eqref{eq:d-geq-a-norms},\eqref{eq:case-3-1}}{\underset{\phantom{\eqref{eq:case-3-1}}}{\geq}} & \|d\|\left(\sqrt{1-\theta_{Z}}-\sqrt{\frac{\theta_{D}|V|}{1-\theta_{D}}}\right)\|a\|.\nonumber \\
 & \overset{\eqref{eq:x-leq-a}}{\geq} & \|d\|\underbrace{\left(\sqrt{1-\theta_{Z}}-\sqrt{\frac{\theta_{D}}{1-\theta_{D}}|V|}\right)\left(1+\sqrt{\frac{\theta_{D}}{1-\theta_{D}}|V|}\right)^{-1}}_{c(\theta_{Z},\theta_{D})}\|x_{i}^{0}\|.\nonumber 
\end{eqnarray}
Let $c(\theta_{Z},\theta_{D})$ be the formula marked above. Let $\hat{d}=d/\|d\|$.
We have 
\begin{equation}
\begin{array}{c}
\hat{d}^{T}\hat{z}_{i}^{0}=\frac{1}{\|z_{i}^{0}\|}\left(\hat{d}^{T}(x_{i}^{0}+z_{i}^{0})-\hat{d}^{T}x_{i}^{0}\right)\overset{\eqref{eq:norm-z-i-bdd},\eqref{eq:d-x-z-leq-0},\eqref{eq:d-T-x}}{\underset{\phantom{\eqref{eq:d-x-z-leq-0}}}{\leq}}\frac{-c(\theta_{Z},\theta_{D})}{M_{\max}}\|x_{i}^{0}\|.\end{array}\label{eq:d-z-hat}
\end{equation}

We then project $x_{i}^{0}+z_{i}^{0}$ onto $P_{i}$. Suppose $\hat{z}_{i}^{0}$
is close enough to $\hat{z}_{i}^{p}$ so that $\|\hat{z}_{i}^{p}-\hat{z}_{i}^{0}\|\leq\frac{1}{2}$.
Then 
\begin{equation}
\begin{array}{c}
(\hat{z}_{i}^{p})^{T}z_{i}^{0}=(\hat{z}_{i}^{0})^{T}z_{i}^{0}+(\hat{z}_{i}^{p}-\hat{z}_{i}^{0})^{T}z_{i}^{0}\geq\|z_{i}^{0}\|(1-\|\hat{z}_{i}^{p}-\hat{z}_{i}^{0}\|)\overset{\eqref{eq:norm-z-i-bdd}}{\underset{\phantom{\eqref{eq:norm-z-i-bdd}}}{\geq}}\frac{1}{2}M_{\min}.\end{array}\label{eq:z-p-z-geq-half-M}
\end{equation}
If we assume that $x_{i}^{0}$ is close enough to $x^{*}$ so that
$\|x_{i}^{0}\|\leq\frac{1}{3}M_{\min}$, then $(\hat{z}_{i}^{p})^{T}x_{i}^{0}\geq-\|\hat{z}_{i}^{p}\|\|x_{i}^{0}\|=-\|x_{i}^{0}\|\geq-\frac{1}{3}M_{\min}$,
and so 
\[
\begin{array}{c}
(\hat{z}_{i}^{p})^{T}(x_{i}^{0}+z_{i}^{0})\overset{\eqref{eq:z-p-z-geq-half-M}}{\underset{\phantom{\eqref{eq:z-p-z-geq-half-M}}}{\geq}}-\frac{1}{3}M_{\min}+\frac{1}{2}M_{\min}=\frac{1}{6}M_{min}>0.\end{array}
\]
This means that $x_{i}^{0}+z_{i}^{0}$ does not satisfy the second
inequality in the definition of $P_{i}$ in \eqref{eq:poly-def-P-i},
so at least one of the inequalities there must be active at $P_{P_{i}}(x_{i}^{0}+z_{i}^{0})$.
We let the point $P_{P_{i}}(x_{i}^{0}+z_{i}^{0})$ be $\tilde{x}_{i}^{+}$.
\begin{claim*}
Recall that $\lim_{(\theta_{Z},\theta_{D})\to(0,0)}c(\theta_{Z},\theta_{D})\overset{\eqref{eq:d-T-x}}{=}1$.
Let $\kappa_{5}$ be $\frac{3\kappa_{4}|V|(M_{\max}\tilde{\kappa}_{3}+1)}{M_{\min}}+\tilde{\kappa}_{3}$,
which is checked to be greater than $1$. Suppose $\theta_{Z},\theta_{D}>0$
are chosen small enough so that the following conditions hold: \begin{subequations}\label{eq_m:3-cond}
\begin{align}
 & \begin{array}{c}
\frac{2\kappa_{4}|V|(M_{\max}\tilde{\kappa}_{3}+1)}{M_{\min}}\frac{1+\sqrt{\frac{\theta_{D}}{1-\theta_{D}}|V|}}{1-\sqrt{\frac{\theta_{D}}{1-\theta_{D}}|V|}}+\tilde{\kappa}_{3}\leq\kappa_{5},\end{array}\label{eq:3-cond-1}\\
 & \begin{array}{c}
\left(1-\sqrt{\frac{\theta_{D}|V|}{1-\theta_{D}}}\right)^{-1}\left(\sqrt{\frac{\theta_{D}|V|}{1-\theta_{D}}}+\theta_{Z}\right)-\left(1+\sqrt{\frac{\theta_{D}|V|}{1-\theta_{D}}}\right)^{-1}\frac{\sqrt{1-\theta_{Z}}c(\theta_{Z},\theta_{D})}{M_{\max}\kappa_{5}}\leq\frac{-1}{2M_{\max}\kappa_{5}},\end{array}\label{eq:3-cond-2}\\
 & \begin{array}{c}
c(\theta_{Z},\theta_{D})\geq\frac{1}{2M_{\max}\kappa_{5}}.\end{array}\label{eq:3-cond-3}
\end{align}
\end{subequations}Then $\|x_{i}^{0}-\tilde{x}_{i}^{+}\|\geq\frac{1}{2M_{\max}\kappa_{5}}\|x_{i}^{0}\|$. 
\end{claim*}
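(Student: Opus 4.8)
The plan is to bound $\|x_i^0-\tilde x_i^+\|$ from below by its component along the unit vector $\hat d=d/\|d\|$, exploiting that $d=P_{Z^\perp}a$ is orthogonal to every $z_j^p$, and in particular to $z_i^p$, by \eqref{eq:d-perp-z}. Before doing so I would record the estimate $\|\hat z_i^0-\hat z_i^p\|\le\kappa_5\|x_i^0\|$, which is the geometric content of the constant $\kappa_5$ and of condition \eqref{eq:3-cond-1}. This follows from the triangle inequality $\|\hat z_i^0-\hat z_i^p\|\le\|\hat z_i^0-\hat z_i^r\|+\|\hat z_i^r-\hat z_i^p\|$: the first term is controlled by \eqref{eq:bdd-by-tilde-kappa}, while for the second I would pass to the non-normalized vectors and invoke the linear regularity inequality \eqref{eq:lin-reg-ineq}. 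Indeed $\mathbf z^r=(z_1^r,\dots,z_{|V|}^r)\in M_1$ since each $z_i^r\in N_{C_i}(x^*)$, so \eqref{eq:lin-reg-ineq} bounds $\|\mathbf z^r-\mathbf z^p\|=d(\mathbf z^r,M)$ by $\kappa_4\,d(\mathbf z^r,M_2)$; the latter distance is in turn controlled by $\|\sum_{i\in V}z_i^r-|V|\bar x\|$, which by Proposition \ref{prop:sparsity}(2) and $\mathbf x=\bar{\mathbf x}-\sum_\alpha\mathbf z_\alpha$ is of order $\sum_{i\in V}\|x_i^0\|$ up to the perturbations $\|z_i^r-z_i^0\|$ estimated through \eqref{eq:norm-z-i-bdd} and \eqref{eq:normals-linear-dec}. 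Dividing by $\|z_i^r\|\ge M_{\min}$ and converting $\|a\|$ to $\|x_i^0\|$ through \eqref{eq:x-leq-a}--\eqref{eq:x-geq-a} produces exactly the left-hand side of \eqref{eq:3-cond-1}, so the bound $\|\hat z_i^0-\hat z_i^p\|\le\kappa_5\|x_i^0\|$ holds; in particular this legitimizes the earlier assumption $\|\hat z_i^p-\hat z_i^0\|\le\tfrac12$ once $\|x_i^0\|$ is small.

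Next I would set up the projection onto the two-halfspace polyhedron $P_i$ of \eqref{eq:poly-def-P-i}. Writing the projection residual through the normal cone, $(x_i^0+z_i^0)-\tilde x_i^+=\lambda_1\hat z_i^0+\lambda_2 z_i^p$ with $\lambda_1,\lambda_2\ge0$ and complementary slackness, and recalling $z_i^0=\|z_i^0\|\hat z_i^0$, we get $x_i^0-\tilde x_i^+=(\lambda_1-\|z_i^0\|)\hat z_i^0+\lambda_2 z_i^p$. Taking the inner product with $\hat d$ and using \eqref{eq:d-perp-z} to annihilate the $z_i^p$ term yields the master identity $\hat d^T(x_i^0-\tilde x_i^+)=(\|z_i^0\|-\lambda_1)\,\hat d^T\hat z_i^0$, hence $\|x_i^0-\tilde x_i^+\|\ge\big|\,\|z_i^0\|-\lambda_1\,\big|\,|\hat d^T\hat z_i^0|$. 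The factor $|\hat d^T\hat z_i^0|$ is controlled on both sides: from below by $\tfrac{c(\theta_Z,\theta_D)}{M_{\max}}\|x_i^0\|$ via \eqref{eq:d-z-hat}, and from above by $\|\hat z_i^0-\hat z_i^p\|\le\kappa_5\|x_i^0\|$ (again using $\hat d\perp z_i^p$, so that $\hat d^T\hat z_i^0=\hat d^T(\hat z_i^0-\hat z_i^p)$).

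It remains to bound the scalar $\big|\,\|z_i^0\|-\lambda_1\,\big|$ away from zero, and this is the heart of the matter. Here I would use that $x_i^0+z_i^0$ violates the second constraint of $P_i$ with the definite margin $(\hat z_i^p)^T(x_i^0+z_i^0)\ge\tfrac16 M_{\min}$ recorded before the claim, together with $(\hat z_i^p)^T\tilde x_i^+\le0$, to extract information on $\lambda_1,\lambda_2$ from the equations of the active constraints, splitting according to whether the first constraint is active at $\tilde x_i^+$. The Case 3a hypothesis $\|P_Za\|^2\le\theta_Z\|a\|^2$ and the estimates \eqref{eq:case-3-1} and \eqref{eq:x-geq-a} convert each quantity into a multiple of $\|x_i^0\|$, and conditions \eqref{eq:3-cond-2}--\eqref{eq:3-cond-3} are precisely the inequalities making the resulting lower bound at least $\tfrac{1}{2M_{\max}\kappa_5}\|x_i^0\|$.

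I expect the main obstacle to be exactly the control of $\lambda_1$. Because $\hat z_i^0$ and $z_i^p$ are nearly parallel (their unit vectors differ by $O(\|x_i^0\|)$), the $2\times2$ system for $(\lambda_1,\lambda_2)$ is ill-conditioned, and the degenerate regime $\lambda_1\approx\|z_i^0\|$ — in which the projection barely moves $x_i^0$ along $\hat d$ — must be excluded. Ruling it out requires playing the constraint-2 margin $\tfrac16 M_{\min}$ against the offset $\epsilon_i=\delta_{C_i}^*(\hat z_i^0)=O(\|x_i^0\|)$ and against the near-alignment bound $\|\hat z_i^0-\hat z_i^p\|\le\kappa_5\|x_i^0\|$, which is the delicate calibration encoded in \eqref{eq:3-cond-2}.
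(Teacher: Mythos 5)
Your setup is sound and your Step-1 estimate $\|\hat z_i^0-\hat z_i^p\|\le\kappa_5\|x_i^0\|$ is exactly the paper's (via \eqref{eq:z-p-minus-r}--\eqref{eq:neq-z0-zp}), but the proof has a genuine gap at precisely the point you flag as ``the heart of the matter'': you never actually bound $\bigl|\,\|z_i^0\|-\lambda_1\,\bigr|$ away from zero, and your master identity alone cannot do it. The difficulty is not merely technical. If only the constraint $(\hat z_i^0)^Tx\le\epsilon_i$ is active at $\tilde x_i^+$, then $\lambda_2=0$ and $\lambda_1=(\hat z_i^0)^Tx_i^0+\|z_i^0\|-\epsilon_i$, so $\bigl|\,\|z_i^0\|-\lambda_1\,\bigr|=|(\hat z_i^0)^Tx_i^0-\epsilon_i|=O(\|x_i^0\|)$; your identity then yields only $\|x_i^0-\tilde x_i^+\|\gtrsim\|x_i^0\|^2$, which is useless. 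The paper must therefore prove this configuration \emph{impossible} (its Case 3a-3, using that $(\hat z_i^0)^Tx_i'\le0$ forces any point $x_i'+\lambda z_i^0$ with $(\hat z_i^0)^T(\cdot)\ge\epsilon_i>0$ to violate $(\hat z_i^p)^T(\cdot)\le0$); nothing in your outline substitutes for that argument, and without it the claim can fail along the route you chose.

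In the remaining case where both constraints are active, the paper does \emph{not} measure the displacement along $\hat d$ at all: since $\hat z_i^0$ and $\hat z_i^p$ are nearly parallel, it passes to the reduced normal $P_{(\hat z_i^p)^{\perp}}(\hat z_i^0)$, which has small norm (\eqref{eq:normal-z-length}) but a uniformly negative normalized inner product with $d$ (\eqref{eq:inner-pdts-very-neg}), and then establishes the key slope inequality \eqref{eq:hyperplane-slope} bounding $\tfrac{(x_i^0)^TP_{(\hat z_i^p)^{\perp}}(\hat z_i^0)}{\|x_i^0\|\|P_{(\hat z_i^p)^{\perp}}(\hat z_i^0)\|}\le-\tfrac{1}{2M_{\max}\kappa_5}$ via \eqref{eq:3-cond-2}; combined with $(\,\cdot\,)^T\tilde x_i^+=\epsilon_i/\|P_{(\hat z_i^p)^{\perp}}(\hat z_i^0)\|\ge0$ this gives the stated lower bound. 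Your sentence that conditions \eqref{eq:3-cond-2}--\eqref{eq:3-cond-3} ``are precisely the inequalities making the resulting lower bound'' work asserts the conclusion of this computation without performing it. To repair the proposal you would need to (i) rule out the first-constraint-only case, and (ii) in the both-active case either reproduce the reduced-normal computation or show directly that the KKT system forces $\lambda=\bigl|\,\|z_i^0\|-\lambda_1\,\bigr|\ge\text{const}$ --- which, because the $2\times2$ Gram system is ill-conditioned exactly as you note, again requires the geometric input of \eqref{eq:hyperplane-slope}.
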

We now prove the claim. For $\tilde{x}_{i}^{+}=P_{P_{i}}(x_{i}^{0}+z_{i}^{0})$,
there are three different cases.

\textbf{Case 3a-1:} Only the constraint $(\hat{z}_{i}^{p})^{T}x\leq0$
in \eqref{eq:poly-def-P-i} is active at $\tilde{x}_{i}^{+}$.

If that active constraint is $(\hat{z}_{i}^{p})^{T}x\leq0$, then
by the KKT conditions, $\tilde{x}_{i}^{+}$ would be of the form $\tilde{x}_{i}^{+}=x_{i}^{0}+z_{i}^{0}-\lambda\hat{z}_{i}^{p}$,
and hence 
\begin{equation}
d^{T}\tilde{x}_{i}^{+}=d^{T}(x_{i}^{0}+z_{i}^{0})-\lambda d^{T}\hat{z}_{i}^{p}\overset{\eqref{eq:d-perp-z},\eqref{eq:d-x-z-leq-0}}{\leq}0.\label{eq:dx-leq-0}
\end{equation}
Then 
\[
\begin{array}{c}
\|x_{i}^{0}-\tilde{x}_{i}^{+}\|\geq\hat{d}^{T}(x_{i}^{0}-\tilde{x}_{i}^{+})\overset{\eqref{eq:d-T-x},\eqref{eq:dx-leq-0}}{\geq}c(\theta_{Z},\theta_{D})\|x_{i}^{0}\|\overset{\eqref{eq:3-cond-3}}{\geq}\frac{1}{2M_{\max}\kappa_{5}}\|x_{i}^{0}\|.\end{array}
\]

\textbf{Case 3a-2: }Both constraints in \eqref{eq:poly-def-P-i} are
active at $\tilde{x}_{i}^{+}$. 

\textbf{Step 1:} Bounding $\|\hat{z}_{i}^{0}-\hat{z}_{i}^{p}\|$. 

For all $i\in V$, we have 
\begin{align}
\|z_{i}^{p}-z_{i}^{r}\| & \leq\|(z_{1}^{p},\dots,z_{|V|}^{p})-(z_{1}^{r},\dots,z_{|V|}^{r})\|\label{eq:z-p-minus-r}\\
 & \overset{\eqref{eq:z-p-def-proj}}{=}d\big((z_{1}^{r},\dots,z_{|V|}^{r}),M\big)\overset{\scriptsize{\text{Assu \ref{assu:the-assu}\eqref{enu:lin-reg}}}}{\leq}\kappa_{4}d\big((z_{1}^{r},\dots,z_{|V|}^{r}),M_{2}\big).\nonumber 
\end{align}
The projection of $(z_{1}^{r},\dots,z_{|V|}^{r})$ onto $M_{2}$ is
$(z_{1}^{r}-\delta,z_{2}^{r}-\delta,\dots,z_{|V|}^{r}-\delta)$, where
$M_{2}$ is as defined in \eqref{eq:set-M2} and $\delta=\frac{1}{|V|}(\sum_{i\in V}z_{i}^{r}-|V|\bar{x})$.
This means that 
\begin{equation}
\begin{array}{c}
d\big((z_{1}^{r},\dots,z_{|V|}^{r}),M_{2}\big)=\sqrt{|V|}\|\delta\|=\frac{1}{\sqrt{|V|}}\left\Vert \underset{i\in V}{\overset{\phantom{i\in V}}{\sum}}z_{i}^{r}-|V|\bar{x}\right\Vert .\end{array}\label{eq:d-to-M-2}
\end{equation}
For the parameters $(z_{1}^{r},\dots,z_{|V|}^{r})$, we note from
Proposition \ref{prop:sparsity} that $z_{i}^{0}=[\mathbf{z}_{i}^{0}]_{i}$,
$[\mathbf{z}_{i}^{0}]_{j}=0$ for all $j\neq i$ and $\sum_{j\in V}[\sum_{\alpha\in E}\mathbf{z}_{\alpha}^{0}]_{j}=0$,
which gives 
\begin{eqnarray}
\begin{array}{c}
\underset{i\in V}{\overset{\phantom{i\in V}}{\sum}}z_{i}^{r}-|V|\bar{x}\end{array} & \overset{\scriptsize{\text{Prop \ref{prop:sparsity}}}}{=} & \begin{array}{c}
\underset{i\in V}{\overset{\phantom{i\in V}}{\sum}}(z_{i}^{r}-z_{i}^{0})+\underset{j\in V}{\overset{\phantom{i\in V}}{\sum}}\left[\underset{\alpha\in E}{\overset{\phantom{i\in V}}{\sum}}\mathbf{z}_{\alpha}^{0}+\underset{i\in V}{\overset{\phantom{i\in V}}{\sum}}\mathbf{z}_{i}^{0}-\bar{\mathbf{x}}\right]_{j}\end{array}\nonumber \\
 & = & \begin{array}{c}
\underset{i\in V}{\overset{\phantom{i\in V}}{\sum}}(z_{i}^{r}-z_{i}^{0})-\underset{i\in V}{\overset{\phantom{i\in V}}{\sum}}x_{i}^{0}.\end{array}\label{eq:step-1-identity}
\end{eqnarray}
Recall that $z_{i}^{r}=P_{N_{C_{i}}(x^{*})}(z_{i}^{0})$, and Assumption
\ref{assu:the-assu}\eqref{enu:osc-of-norms-hat}. This gives $\|\hat{z}_{i}^{r}-\hat{z}_{i}^{0}\|\overset{\eqref{eq:bdd-by-tilde-kappa}}{\leq}\tilde{\kappa}_{3}\|x_{i}^{0}\|$
and
\begin{equation}
\|z_{i}^{r}-z_{i}^{0}\|=d(z_{i}^{0},N_{C_{i}}(x^{*}))\leq\big\| z_{i}^{0}-\hat{z}_{i}^{r}\|z_{i}^{0}\|\big\|=\|z_{i}^{0}\|\|\hat{z}_{i}^{r}-\hat{z}_{i}^{0}\|\overset{\eqref{eq:norm-z-i-bdd},\eqref{eq:bdd-by-tilde-kappa}}{\leq}M_{\max}\tilde{\kappa}_{3}\|x_{i}^{0}\|.\label{eq:step-1-bdd-1}
\end{equation}
So 
\begin{eqnarray}
\begin{array}{c}
\end{array} &  & \begin{array}{c}
\left\Vert \underset{i\in V}{\overset{\phantom{i\in V}}{\sum}}z_{i}^{r}-|V|\bar{x}\right\Vert \overset{\eqref{eq:step-1-identity}}{\leq}\underset{i\in V}{\overset{\phantom{i\in V}}{\sum}}\|z_{i}^{r}-z_{i}^{0}\|+\underset{i\in V}{\overset{\phantom{i\in V}}{\sum}}\|x_{i}^{0}\|\end{array}\label{eq:delta-term-bdd}\\
 & \overset{\eqref{eq:step-1-bdd-1}}{\leq} & \begin{array}{c}
\underset{i\in V}{\overset{\phantom{i\in V}}{\sum}}(M_{\max}\tilde{\kappa}_{3}+1)\|x_{i}^{0}\|\leq|V|(M_{\max}\tilde{\kappa}_{3}+1)\|\mathbf{x}^{0}\|.\end{array}\nonumber 
\end{eqnarray}
Hence, for all $i\in V$, we have 
\begin{equation}
\|z_{i}^{p}-z_{i}^{r}\|\overset{\eqref{eq:z-p-minus-r},\eqref{eq:d-to-M-2},\eqref{eq:delta-term-bdd}}{\leq}\kappa_{4}\sqrt{|V|}(M_{\max}\tilde{\kappa}_{3}+1)\|\mathbf{x}^{0}\|.\label{eq:z-p-and-r}
\end{equation}
Also, 
\begin{eqnarray}
 &  & \begin{array}{c}
\|\hat{z}_{i}^{p}-\hat{z}_{i}^{r}\|\leq\left\Vert \frac{z_{i}^{p}}{\|z_{i}^{p}\|}-\frac{z_{i}^{r}}{\|z_{i}^{p}\|}\right\Vert +\left\Vert \frac{z_{i}^{r}}{\|z_{i}^{p}\|}-\frac{z_{i}^{r}}{\|z_{i}^{r}\|}\right\Vert \end{array}\label{eq:z-hat-p-and-r}\\
 & = & \begin{array}{c}
\frac{1}{\|z_{i}^{p}\|}\|z_{i}^{p}-z_{i}^{r}\|+\|z_{i}^{r}\|\frac{|\|z_{i}^{p}\|-\|z_{i}^{r}\||}{\|z_{i}^{p}\|\|z_{i}^{r}\|}\end{array}\nonumber \\
 & \leq & \begin{array}{c}
\frac{1}{\|z_{i}^{p}\|}\|z_{i}^{p}-z_{i}^{r}\|+\frac{1}{\|z_{i}^{p}\|}\|z_{i}^{p}-z_{i}^{r}\|\overset{\eqref{eq:z-p-and-r},\eqref{eq:norm-z-i-bdd}}{\leq}\frac{2\kappa_{4}\sqrt{|V|}(M_{\max}\tilde{\kappa}_{3}+1)}{M_{\min}}\|\mathbf{x}^{0}\|.\end{array}\nonumber 
\end{eqnarray}
Since $z_{i}^{r}=P_{N_{C_{i}}(x^{*})}(z_{i}^{0})$, Assumption \ref{assu:the-assu}\eqref{enu:osc-of-norms-hat}
shows us that $\|\hat{z}_{i}^{0}-\hat{z}_{i}^{r}\|\leq\kappa_{3}\|x_{i}^{0}\|$.
Note that for any $i\in V$, 
\begin{equation}
\begin{array}{c}
\|x_{i}^{0}\|^{2}\leq\|\mathbf{x}^{0}\|^{2}\overset{\eqref{eq:x-leq-a}}{\leq}|V|\left(1+\sqrt{\frac{\theta_{D}|V|}{1-\theta_{D}}}\right)^{2}\|a\|^{2}\overset{\eqref{eq:x-geq-a}}{\underset{\phantom{\eqref{eq:x-geq-a}}}{\leq}}|V|\left(\frac{1+\sqrt{\frac{\theta_{D}|V|}{1-\theta_{D}}}}{1-\sqrt{\frac{\theta_{D}|V|}{1-\theta_{D}}}}\right)^{2}\|x_{i}^{0}\|^{2}.\end{array}\label{eq:equiv-x-i-mathbf-x}
\end{equation}
We thus have 
\begin{equation}
\|\hat{z}_{i}^{0}-\hat{z}_{i}^{p}\|\leq\|\hat{z}_{i}^{0}-\hat{z}_{i}^{r}\|+\|\hat{z}_{i}^{r}-\hat{z}_{i}^{p}\|\overset{\eqref{eq:bdd-by-tilde-kappa},\eqref{eq:z-hat-p-and-r},\eqref{eq:equiv-x-i-mathbf-x},\eqref{eq:3-cond-1}}{\leq}\kappa_{5}\|x_{i}^{0}\|.\label{eq:neq-z0-zp}
\end{equation}

\textbf{Step 2:} Showing $\|x_{i}^{0}-\tilde{x}_{i}^{+}\|$ is large
enough. 

Since both constraints in $P_{i}$ (see \eqref{eq:poly-def-P-i})
are tight at $\tilde{x}_{i}^{+}$, the projection of $x_{i}^{0}+z_{i}^{0}$
onto $P_{i}$ is equivalent to the projection of $P_{(\hat{z}_{i}^{p})^{\perp}}(x_{i}^{0}+z_{i}^{0})$
onto $\{x:(P_{(\hat{z}_{i}^{p})^{\perp}}(\hat{z}_{i}^{0}))^{T}x=\epsilon_{i}\}$.
We have 
\begin{equation}
\|P_{(\hat{z}_{i}^{p})^{\perp}}(\hat{z}_{i}^{0})\|=\|P_{(\hat{z}_{i}^{p})^{\perp}}(\hat{z}_{i}^{0}-\hat{z}_{i}^{p})\|\leq\|\hat{z}_{i}^{0}-\hat{z}_{i}^{p}\|\overset{\eqref{eq:neq-z0-zp}}{\leq}\kappa_{5}\|x_{i}^{0}-x^{*}\|.\label{eq:normal-z-length}
\end{equation}
Note that by the KKT conditions, $P_{(\hat{z}_{i}^{p})^{\perp}}(\hat{z}_{i}^{0})=\hat{z}_{i}^{0}-\lambda\hat{z}_{i}^{p}$
for some $\lambda\in\mathbb{R}$. So 
\begin{equation}
\begin{array}{c}
\hat{d}^{T}(P_{(\hat{z}_{i}^{p})^{\perp}}(\hat{z}_{i}^{0}))=\hat{d}^{T}(\hat{z}_{i}^{0}-\lambda\hat{z}_{i}^{p})\overset{\eqref{eq:d-perp-z}}{\underset{\phantom{\eqref{eq:d-perp-z}}}{=}}\hat{d}^{T}\hat{z}_{i}^{0}\overset{\eqref{eq:d-z-hat}}{\leq}\frac{-c(\theta_{Z},\theta_{D})}{M_{\max}}\|x_{i}^{0}-x^{*}\|<0.\end{array}\label{eq:normal-z-tilt}
\end{equation}
Then we have 
\begin{equation}
\begin{array}{c}
\frac{d^{T}(P_{(\hat{z}_{i}^{p})^{\perp}}(\hat{z}_{i}^{0}))}{\|d\|\|P_{(\hat{z}_{i}^{p})^{\perp}}(\hat{z}_{i}^{0})\|}\overset{\scriptsize{\text{\eqref{eq:normal-z-length},\eqref{eq:normal-z-tilt}, terms}<0}}{\leq}\frac{-c(\theta_{Z},\theta_{D})}{M_{\max}\kappa_{5}}.\end{array}\label{eq:inner-pdts-very-neg}
\end{equation}

Also,
\begin{eqnarray}
 &  & \begin{array}{c}
\frac{(x_{i}^{0})^{T}P_{(\hat{z}_{i}^{p})^{\perp}}(\hat{z}_{i}^{0})}{\|x_{i}^{0}\|\|P_{(\hat{z}_{i}^{p})^{\perp}}(\hat{z}_{i}^{0})\|}=\frac{(x_{i}^{0}-d)^{T}P_{(\hat{z}_{i}^{p})^{\perp}}(\hat{z}_{i}^{0})}{\|x_{i}^{0}\|\|P_{(\hat{z}_{i}^{p})^{\perp}}(\hat{z}_{i}^{0})\|}+\frac{d^{T}P_{(\hat{z}_{i}^{p})^{\perp}}(\hat{z}_{i}^{0})}{\|x_{i}^{0}\|\|P_{(\hat{z}_{i}^{p})^{\perp}}(\hat{z}_{i}^{0})\|}\end{array}\label{eq:hyperplane-slope}\\
 & = & \begin{array}{c}
\frac{((x_{i}^{0}-a)+(a-d))^{T}P_{(\hat{z}_{i}^{p})^{\perp}}(\hat{z}_{i}^{0})}{\|x_{i}^{0}\|\|P_{(\hat{z}_{i}^{p})^{\perp}}(\hat{z}_{i}^{0})\|}+\frac{\|d\|}{\|x_{i}^{0}\|}\frac{d^{T}P_{(\hat{z}_{i}^{p})^{\perp}}(\hat{z}_{i}^{0})}{\|d\|\|P_{(\hat{z}_{i}^{p})^{\perp}}(\hat{z}_{i}^{0})\|}\end{array}\nonumber \\
 & \overset{\eqref{eq:inner-pdts-very-neg}}{\leq} & \begin{array}{c}
\frac{\|x_{i}^{0}-a\|}{\|x_{i}^{0}\|}+\frac{\|a-d\|}{\|x_{i}^{0}\|}-\frac{c(\theta_{Z},\theta_{D})}{M_{\max}\kappa_{5}}\frac{\|d\|}{\|a\|}\frac{\|a\|}{\|x_{i}^{0}\|}\end{array}\nonumber \\
 & \overset{\eqref{eq:a-minus-x},\eqref{eq_m:d-and-d-minus-a}}{\leq} & \begin{array}{c}
\left(1-\sqrt{\frac{\theta_{D}|V|}{1-\theta_{D}}}\right)^{-1}\left(\sqrt{\frac{\theta_{D}|V|}{1-\theta_{D}}}\right)+\theta_{Z}\frac{\|a\|}{\|x_{i}^{0}\|}-\frac{c(\theta_{Z},\theta_{D})}{M_{\max}\kappa_{5}}\frac{\sqrt{1-\theta_{Z}}\|a\|}{\|x_{i}^{0}\|}.\end{array}\nonumber \\
 & \overset{\eqref{eq_m:x-compare-a},\eqref{eq:3-cond-2}}{\leq} & \begin{array}{c}
-\frac{1}{2M_{\max}\kappa_{5}}.\end{array}\nonumber 
\end{eqnarray}
Note that $\tilde{x}_{i}^{+}$ is the deflection of $x_{i}^{0}$
along the normal $P_{(\hat{z}_{i}^{p})^{\perp}}(\hat{z}_{i}^{0})$,
i.e., $\tilde{x}_{i}^{+}=x_{i}^{0}+\lambda P_{(\hat{z}_{i}^{p})^{\perp}}(\hat{z}_{i}^{0})$
for some $\lambda\geq0$. Moreover, we have $\left(\frac{P_{(\hat{z}_{i}^{p})^{\perp}}(\hat{z}_{i}^{0})}{\|P_{(\hat{z}_{i}^{p})^{\perp}}(\hat{z}_{i}^{0})\|}\right)^{T}\tilde{x}_{i}^{+}=\frac{\epsilon_{i}}{\|P_{(\hat{z}_{i}^{p})^{\perp}}(\hat{z}_{i}^{0})\|}$
since the two constraints in the definition of $P_{i}$ in \eqref{eq:poly-def-P-i}
are tight. The distance of $\tilde{x}_{i}^{+}$ must be at least 
\begin{eqnarray*}
\begin{array}{c}
\|\tilde{x}_{i}^{+}-x_{i}^{0}\|\end{array} & \geq & \begin{array}{c}
\left(\frac{P_{(\hat{z}_{i}^{p})^{\perp}}(\hat{z}_{i}^{0})}{\|P_{(\hat{z}_{i}^{p})^{\perp}}(\hat{z}_{i}^{0})\|}\right)^{T}(\tilde{x}_{i}^{+}-x_{i}^{0})\end{array}\\
 & \overset{\eqref{eq:hyperplane-slope}}{\geq} & \begin{array}{c}
\frac{\epsilon_{i}}{\|P_{(\hat{z}_{i}^{p})^{\perp}}(\hat{z}_{i}^{0})\|}-\left(-\frac{1}{2M_{\max}\kappa_{5}}\right)\|x_{i}^{0}\|\geq\frac{1}{2M_{\max}\kappa_{5}}\|x_{i}^{0}\|,\end{array}
\end{eqnarray*}
which concludes the proof for this case.

\textbf{Case 3a-3:} Only the constraint $(\hat{z}_{i}^{0})^{T}x\leq\epsilon_{i}$
in \eqref{eq:poly-def-P-i} is active at $\tilde{x}_{i}^{+}$. 

We now show that this case is impossible by showing that $(\hat{z}_{i}^{0})^{T}\tilde{x}_{i}^{+}=\epsilon_{i}>0$
and $(\hat{z}_{i}^{p})^{T}\tilde{x}_{i}^{+}\leq0$ cannot hold at
the same time. We have $\|x_{i}^{0}-a\|\overset{\eqref{eq:case-3-1}}{\leq}\sqrt{\frac{\theta_{D}|V|}{1-\theta_{D}}}\|a\|$.
By the nonexpansiveness of the projection operation, we have 
\begin{eqnarray}
 &  & \begin{array}{c}
\|P_{(\hat{z}_{i}^{p})^{\perp}}(x_{i}^{0})-d\|\overset{\eqref{eq:d-perp-z}}{\leq}\|x_{i}^{0}-d\|\leq\|x_{i}^{0}-a\|+\|d-a\|\end{array}\nonumber \\
 & \!\!\!\!\!\!\!\!\!\!\overset{\scriptsize{\text{\eqref{eq:case-3-1},\eqref{eq:d-minus-a-leq-a-norms}}}}{\underset{\phantom{\scriptsize{\text{\eqref{eq:d-minus-a-leq-a-norms}}}}}{\leq}} & \begin{array}{c}
\left(\sqrt{\frac{\theta_{D}|V|}{1-\theta_{D}}}+\sqrt{\theta_{Z}}\right)\|a\|\overset{\eqref{eq:d-geq-a-norms}}{\leq}\frac{1}{\sqrt{1-\theta_{Z}}}\left(\sqrt{\frac{\theta_{D}|V|}{1-\theta_{D}}}+\sqrt{\theta_{Z}}\right)\|d\|.\end{array}\label{eq:x-i-to-d}
\end{eqnarray}
Define $x_{i}'$ to be the point such that $x_{i}'=x_{i}^{0}+\lambda\hat{z}_{i}^{0}$
and $(\hat{z}_{i}^{p})^{T}x_{i}'=0$. Note that $P_{(\hat{z}_{i}^{p})^{\perp}}(x_{i}^{0})$
is of the form $x_{i}^{0}+\lambda\hat{z}_{i}^{p}$ with $(\hat{z}_{i}^{p})^{T}P_{(\hat{z}_{i}^{p})^{\perp}}(x_{i}^{0})=0$.
Further arithmetic gives us 
\begin{equation}
\begin{array}{c}
x_{i}'=x_{i}^{0}-\frac{(\hat{z}_{i}^{p})^{T}x_{i}^{0}}{(\hat{z}_{i}^{0})^{T}\hat{z}_{i}^{p}}\hat{z}_{i}^{0}\text{ and }P_{(\hat{z}_{i}^{p})^{\perp}}(x_{i}^{0})=x_{i}^{0}-[(\hat{z}_{i}^{p})^{T}x_{i}^{0}]\hat{z}_{i}^{p}.\end{array}\label{eq:x-prime-and-p}
\end{equation}
Now 
\begin{equation}
\begin{array}{c}
(\hat{z}_{i}^{p})^{T}x_{i}^{0}=(\hat{z}_{i}^{p})^{T}(x_{i}^{0}-d)+(\hat{z}_{i}^{p})^{T}d\leq\|x_{i}^{0}-d\|\overset{\eqref{eq:x-i-to-d}}{\leq}\frac{1}{\sqrt{1-\theta_{Z}}}\left(\sqrt{\frac{\theta_{D}|V|}{1-\theta_{D}}}+\sqrt{\theta_{Z}}\right)\|d\|.\end{array}\label{eq:zp-T-x}
\end{equation}
 Also, $\left\Vert \frac{1}{(\hat{z}_{i}^{0})^{T}\hat{z}_{i}^{p}}\hat{z}_{i}^{0}-\hat{z}_{i}^{p}\right\Vert \leq\|\hat{z}_{i}^{0}-\hat{z}_{i}^{p}\|+\left(\frac{1-(\hat{z}_{i}^{0})^{T}\hat{z}_{i}^{p}}{(\hat{z}_{i}^{0})^{T}\hat{z}_{i}^{p}}\right)$.
Since $\|\hat{z}_{i}^{0}-\hat{z}_{i}^{p}\|$ can be made arbitrarily
small by \eqref{eq:bdd-by-tilde-kappa} and $\left(\frac{1-(\hat{z}_{i}^{0})^{T}\hat{z}_{i}^{p}}{(\hat{z}_{i}^{0})^{T}\hat{z}_{i}^{p}}\right)=\left(\frac{\frac{1}{2}\|\hat{z}_{i}^{0}-\hat{z}_{i}^{p}\|^{2}}{\frac{1}{4}(\|\hat{z}_{i}^{0}+\hat{z}_{i}^{p}\|^{2}-\|\hat{z}_{i}^{0}-\hat{z}_{i}^{p}\|^{2})}\right)$,
we can assume that there is an $\gamma_{1}$ such that $\left\Vert \frac{1}{(\hat{z}_{i}^{0})^{T}\hat{z}_{i}^{p}}\hat{z}_{i}^{0}-\hat{z}_{i}^{p}\right\Vert \leq\gamma_{1}$
throughout. So
\begin{equation}
\begin{array}{c}
\|x_{i}'-P_{(\hat{z}_{i}^{p})^{\perp}}(x_{i}^{0})\|\overset{\eqref{eq:x-prime-and-p}}{=}[(\hat{z}_{i}^{p})^{T}x_{i}^{0}]\left\Vert \frac{1}{(\hat{z}_{i}^{0})^{T}\hat{z}_{i}^{p}}\hat{z}_{i}^{0}-\hat{z}_{i}^{p}\right\Vert \overset{\eqref{eq:zp-T-x}}{\leq}\frac{1}{\sqrt{1-\theta_{Z}}}\left(\sqrt{\frac{\theta_{D}|V|}{1-\theta_{D}}}+\sqrt{\theta_{Z}}\right)\gamma_{1}\|d\|.\end{array}\label{eq:x-minus-p}
\end{equation}

By the KKT conditions, the point $\tilde{x}_{i}^{+}$ has the form
$\tilde{x}_{i}^{+}=x_{i}^{0}+\lambda z_{i}^{0}$ for some $\lambda\geq0$.
We show that  points of the form $x_{i}^{0}+\lambda z_{i}^{0}$,
where $\lambda\in\mathbb{R}$, cannot satisfy both $(\hat{z}_{i}^{0})^{T}(x_{i}^{0}+\lambda z_{i}^{0})\geq0$
and $(\hat{z}_{i}^{p})^{T}(x_{i}^{0}+\lambda z_{i}^{0})\leq0$ at
the same time. Since $x_{i}^{0}-x_{i}'$ is a multiple of $z_{i}^{0}$,
we can prove our results for points of the form $x_{i}'+\lambda z_{i}^{0}$.
 Now, 
\begin{eqnarray*}
(\hat{z}_{i}^{0})^{T}(x_{i}') & = & \begin{array}{c}
(P_{\mathbb{R}(\hat{z}_{i}^{p})}\hat{z}_{i}^{0}+P_{(\hat{z}_{i}^{p})^{\perp}}\hat{z}_{i}^{0})^{T}x_{i}'\end{array}\\
 & = & \begin{array}{c}
(P_{\mathbb{R}(\hat{z}_{i}^{p})}\hat{z}_{i}^{0})^{T}x_{i}'+(P_{(\hat{z}_{i}^{p})^{\perp}}\hat{z}_{i}^{0})^{T}(x_{i}'-d)+(P_{(\hat{z}_{i}^{p})^{\perp}}\hat{z}_{i}^{0})^{T}(d)\end{array}\\
 & \overset{\eqref{eq:inner-pdts-very-neg}}{\leq} & \begin{array}{c}
0+\|P_{(\hat{z}_{i}^{p})^{\perp}}\hat{z}_{i}^{0}\|\|x_{i}'-d\|-\frac{c(\theta_{Z},\theta_{D})}{M_{\max}\kappa_{5}}\|P_{(\hat{z}_{i}^{p})^{\perp}}\hat{z}_{i}^{0}\|\|d\|.\end{array}
\end{eqnarray*}

In view of $\|x_{i}'-d\|\leq\|x_{i}'-P_{(\hat{z}_{i}^{p})^{\perp}}(x_{i}^{0})\|+\|P_{(\hat{z}_{i}^{p})^{\perp}}(x_{i}^{0})-d\|$,
\eqref{eq:x-i-to-d} and \eqref{eq:x-minus-p}, and the fact that
$\lim_{(\theta_{Z},\theta_{D})\to(0,0)}c(\theta_{Z},\theta_{D})=1$,
we can choose $\theta_{Z},\theta_{D}>0$ small enough so that $(\hat{z}_{i}^{0})^{T}(x_{i}')\leq0$.
So if $(\hat{z}_{i}^{0})^{T}(x_{i}'+\lambda z_{i}^{0})\geq0$, then
$\lambda\geq0$, which implies that $(\hat{z}_{i}^{p})^{T}(x_{i}'+\lambda z_{i}^{0})=\lambda(\hat{z}_{i}^{p})^{T}z_{i}^{0}>0$.
This completes the proof of the claim. $\hfill\triangle$

Let the minimizer of $\frac{1}{2}\|(x_{i}^{0}+z_{i}^{0})-\cdot\|^{2}+\delta_{P_{i}}^{*}(\cdot)$
be $\tilde{z}_{i}^{+}$. It is standard to obtain $\tilde{x}_{i}^{+}+\tilde{z}_{i}^{+}=x_{i}^{0}+z_{i}^{0}$.
We have
\begin{eqnarray}
 &  & \begin{array}{c}
\frac{1}{2}\|(x_{i}^{0}+z_{i}^{0})-z_{i}^{0}\|^{2}+\delta_{C_{i}}^{*}(z_{i}^{0})\overset{\eqref{eq:poly-def-P-i}}{=}\frac{1}{2}\|(x_{i}^{0}+z_{i}^{0})-z_{i}^{0}\|^{2}+\delta_{P_{i}}^{*}(z_{i}^{0})\end{array}\nonumber \\
 & \overset{\scriptsize{\tilde{z}_{i}^{+}\text{ minimizer}}}{\geq} & \begin{array}{c}
\frac{1}{2}\|(x_{i}^{0}+z_{i}^{0})-\tilde{z}_{i}^{+}\|^{2}+\delta_{P_{i}}^{*}(\tilde{z}_{i}^{+})+\frac{1}{2}\|z_{i}^{0}-\tilde{z}_{i}^{+}\|^{2}\end{array}\nonumber \\
 & \overset{\delta_{P_{i}}^{*}(\cdot)\geq\delta_{C_{i}}^{*}(\cdot)}{\geq} & \begin{array}{c}
\frac{1}{2}\|(x_{i}^{0}+z_{i}^{0})-\tilde{z}_{i}^{+}\|^{2}+\delta_{C_{i}}^{*}(\tilde{z}_{i}^{+})+\frac{1}{2}\|x_{i}^{0}-\tilde{x}_{i}^{+}\|^{2}\end{array}\nonumber \\
 & \overset{\scriptsize{z_{i}^{+}\text{ minimizer}}}{\geq} & \begin{array}{c}
\frac{1}{2}\|(x_{i}^{0}+z_{i}^{0})-z_{i}^{+}\|^{2}+\delta_{C_{i}}^{*}(z_{i}^{+})+\frac{1}{2}\|x_{i}^{0}-\tilde{x}_{i}^{+}\|^{2}.\end{array}\label{eq:3a-finale}
\end{eqnarray}
Note that $\|x_{i}^{0}\|\overset{\eqref{eq:x-geq-a}}{\geq}\left(1-\sqrt{\frac{\theta_{D}|V|}{1-\theta_{D}}}\right)\|a\|\overset{\eqref{eq:case-3-1}}{\geq}\left(1-\sqrt{\frac{\theta_{D}|V|}{1-\theta_{D}}}\right)\sqrt{\frac{1-\theta_{D}}{|V|}}\|\mathbf{x}^{0}\|$.
Also, $\frac{\bar{\epsilon}+2}{2\bar{\epsilon}}\|\mathbf{x}^{0}\|^{2}\overset{\eqref{eq:case-2-a}}{\geq}F(\mathbf{z}^{0})$.
Therefore 
\begin{eqnarray}
 &  & \begin{array}{c}
F(\mathbf{z}^{1})\overset{\eqref{eq:def-dual-F}}{=}\underset{i\in V}{\overset{\phantom{i\in V}}{\sum}}\big(\delta_{C_{i}}^{*}(z_{i}^{+})+\frac{1}{2}\|x_{i}^{+}\|^{2}\big)\end{array}\label{eq:case-3a-end}\\
 & \overset{\eqref{eq:3a-finale}}{\underset{\phantom{\eqref{eq:3a-finale}}}{\leq}} & \begin{array}{c}
\underset{i\in V}{\overset{\phantom{i\in V}}{\sum}}\big(\delta_{C_{i}}^{*}(z_{i}^{0})+\frac{1}{2}\|x_{i}^{0}\|^{2}\big)-\frac{1}{2}\|x_{i}^{0}-\tilde{x}_{i}^{+}\|^{2}\overset{\scriptsize{\text{\eqref{eq:def-dual-F}}}}{=}F(\mathbf{z}^{0})-\frac{1}{2}\|x_{i}^{0}-\tilde{x}_{i}^{+}\|^{2}\end{array}\nonumber \\
 & \overset{\scriptsize{\text{Claim}}}{\leq} & \begin{array}{c}
F(\mathbf{z}^{0})-\frac{\phantom{M_{\max}}1\phantom{M_{\max}}}{4(M_{\max}\kappa_{5})^{2}}\left(1-\sqrt{\frac{\theta_{D}}{1-\theta_{D}}|V|}\right)^{2}\frac{1-\theta_{D}}{|V|}\frac{\bar{\epsilon}}{2+\bar{\epsilon}}F(\mathbf{z}^{0}).\end{array}\nonumber 
\end{eqnarray}
This once again leads to linear convergence. 

\textbf{Case 3b:} $\|P_{Z}a\|^{2}\geq\theta_{Z}\|a\|^{2}$. 

For each $i\in V$, define the hyperplanes $H_{i}$, $H_{i,0}$ and
$H_{i,0}^{p}$ by 
\[
H_{i}:=\{x:(\hat{z}_{i}^{+})^{T}x=\delta_{C_{i}}^{*}(\hat{z}_{i}^{+})\},\,H_{i,0}:=\{x:(\hat{z}_{i}^{+})^{T}x=0\}\text{ and }H_{i,0}^{p}:=\{x:(\hat{z}_{i}^{p})^{T}x=0\}.
\]
Recall that by Assumption \ref{assu:the-assu}(4), $z_{i}^{0}$ are
big enough so that $x_{i}^{0}+z_{i}^{0}$ is always outside $C_{i}$,
so that $P_{C_{i}}(x_{i}^{0}+z_{i}^{0})$ is onto the boundary of
$C_{i}$ (and not in the interior of $C_{i}$). Recall that the dual
vectors after the projection are $\{z_{i}^{+}\}_{i\in V}$. The term
$\delta_{C_{i}}^{*}(\hat{z}_{i}^{0})$ in the definition of $H_{i}$
implies that $H_{i}$ is a supporting hyperplane of $C_{i}$ at $x_{i}^{+}$
with normal vector $\hat{z}_{i}^{+}$. Due to the fact that the dual
function is decreasing, we have $\delta_{C_{i'}}^{*}(z_{i'}^{+})+\frac{1}{2}\|x_{i'}^{+}\|^{2}\leq\delta_{C_{i'}}^{*}(z_{i'}^{0})+\frac{1}{2}\|x_{i'}^{0}\|^{2}$
for all $i'\in V$, so 
\begin{equation}
\begin{array}{c}
\|x_{i'}^{+}\|^{2}\leq2\delta_{C_{i'}}^{*}(z_{i'}^{+})+\|x_{i'}^{+}\|^{2}\leq2\underset{j\in V}{\overset{\phantom{i\in V}}{\sum}}\delta_{C_{j}}^{*}(z_{j}^{0})+\|\mathbf{x}^{0}\|^{2}\overset{\scriptsize{\text{Case 3}}}{\underset{\phantom{\scriptsize{\text{Case 3}}}}{\leq}}(1+\frac{2}{\bar{\epsilon}})\|\mathbf{x}^{0}\|^{2}.\end{array}\label{eq:for-o-term}
\end{equation}
If a point $x_{i'}^{+}$ is on $C_{i'}$, then the distance of the
supporting hyperplane of $C_{i'}$ at $x_{i'}^{+}$ to the origin
is $o(\|x_{i'}^{+}\|)\overset{\eqref{eq:for-o-term}}{=}o(\|\mathbf{x}^{0}\|)$
by Assumption \ref{assu:the-assu}(3). (We actually have $O(\|\mathbf{x}^{0}\|^{2})$,
but $o(\|\mathbf{x}^{0}\|)$ is enough for this part of the proof.)
So we have $d(0,H_{i})=o(\|x_{i}^{0}\|)$. Since $\|x_{i}^{0}\|\overset{\eqref{eq_m:x-compare-a}}{\in}\Theta(\|a\|)$,
the term $\delta_{C_{i}}^{*}(\hat{z}_{i}^{+})$ is $o(\|a\|)$, for
any $\tilde{\epsilon}_{1}>0$, we have $\tilde{\epsilon}_{1}\|a\|\geq d(0,H_{i})$
for all $i\in V$ if $\mathbf{x}^{0}$ is close enough to $\mathbf{x}^{*}$,
which gives 
\begin{equation}
d(a,H_{i})\geq d(a,H_{i,0})-\tilde{\epsilon}_{1}\|a\|\text{ for all }i\in V.\label{eq:deflect-off-zero}
\end{equation}
We have 
\begin{equation}
\begin{array}{c}
d(a,\cap_{i\in V}H_{i,0}^{p})=d(a,Z^{\perp})=\|P_{Z}a\|\overset{\scriptsize{\text{Case 3b}}}{\underset{\phantom{\scriptsize{\text{C}}}}{\geq}}\sqrt{\theta_{Z}}\|a\|.\end{array}\label{eq:dist-to-norm-a}
\end{equation}
We have $|(\hat{z}_{i}^{p})^{T}a|\leq|(\hat{z}_{i}^{+})^{T}a|+\|\hat{z}_{i}^{p}-\hat{z}_{i}^{+}\|\|a\|$.
Also, $d(a,H_{i,0})=|(\hat{z}_{i}^{+})^{T}a|$ and $|(\hat{z}_{i}^{p})^{T}a|=d(a,H_{i,0}^{p})$,
which leads us to $d(a,H_{i,0})\geq d(a,H_{i,0}^{p})-\|\hat{z}_{i}^{p}-\hat{z}_{i}^{+}\|\|a\|$.
Recall $\|\hat{z}_{i}^{p}-\hat{z}_{i}^{0}\|$ can be arbitrarily small
by \eqref{eq:neq-z0-zp}. Note also that $\|z_{i}^{0}-z_{i}^{+}\|=\|x_{i}^{0}-x_{i}^{+}\|$,
and the latter can be arbitrarily small. Also, by Assumption \ref{assu:the-assu}(4),
$\|z_{i}^{0}\|,\|z_{i}^{+}\|\geq M_{\min}$ so $\|\hat{z}_{i}^{0}-\hat{z}_{i}^{+}\|$
can be arbitrarily small. Thus we can make $\|\hat{z}_{i}^{p}-\hat{z}_{i}^{+}\|\leq\tilde{\epsilon}_{1}$.
So 
\begin{equation}
d(a,H_{i,0})=|(\hat{z}_{i}^{+})^{T}a|\geq|(\hat{z}_{i}^{p})^{T}a|-\|\hat{z}_{i}^{p}-\hat{z}_{i}^{+}\|\|a\|\geq d(a,H_{i,0}^{p})-\tilde{\epsilon}_{1}\|a\|.\label{eq:off-normal-to-normal}
\end{equation}
Next, by Assumption \ref{assu:the-assu}(1), we have 
\begin{equation}
\begin{array}{c}
|(\hat{z}_{i}^{p})^{T}a|=d(a,H_{i,0}^{p})\overset{\scriptsize{\text{Assu \ref{assu:the-assu}(1)}}}{\underset{\phantom{\scriptsize{\text{C}}}}{\geq}}\frac{1}{\kappa_{1}}d(a,\cap_{j\in V}H_{j,0}^{p})\overset{\eqref{eq:dist-to-norm-a}}{\underset{\phantom{\scriptsize{\text{C}}}}{\geq}}\frac{\sqrt{\theta_{Z}}}{\kappa_{1}}\|a\|.\end{array}\label{eq:use-lin-reg}
\end{equation}
We have 
\begin{equation}
\begin{array}{c}
d(a,H_{i})\overset{\eqref{eq:deflect-off-zero},\eqref{eq:off-normal-to-normal}}{\underset{\phantom{\scriptsize{\text{C}}}}{\geq}}d(a,H_{i,0}^{p})-2\tilde{\epsilon}_{1}\|a\|\overset{\eqref{eq:use-lin-reg}}{\underset{\phantom{\scriptsize{\text{C}}}}{\geq}}\left(\frac{\sqrt{\theta_{Z}}}{\kappa_{1}}-2\tilde{\epsilon}_{1}\right)\|a\|.\end{array}\label{eq:last-dist-to-norm}
\end{equation}
We have 
\begin{eqnarray}
\|x_{i}^{0}-x_{i}^{+}\| & \overset{x_{i}^{+}\in H_{i}}{\geq} & d(x_{i}^{0},H_{i})\geq d(a,H_{i})-\|x_{i}^{0}-a\|\label{eq:last-case}\\
 & \overset{\eqref{eq:case-3-1},\eqref{eq:last-dist-to-norm}}{\geq} & \left(\frac{\sqrt{\theta_{Z}}}{\kappa_{1}}-2\tilde{\epsilon}_{1}-\sqrt{\frac{\theta_{D}|V|}{1-\theta_{D}}}\right)\|a\|\nonumber \\
 & \overset{\eqref{eq:case-3-1}}{\geq} & \underbrace{\left(\frac{\sqrt{\theta_{Z}}}{\kappa_{1}}-2\tilde{\epsilon}_{1}-\sqrt{\frac{\theta_{D}|V|}{1-\theta_{D}}}\right)\sqrt{\frac{1-\theta_{D}}{|V|}}}_{c_{2}(\theta_{Z},\theta_{D},\tilde{\epsilon}_{1})}\|\mathbf{x}^{0}\|.\nonumber 
\end{eqnarray}
Since $\theta_{Z},\theta_{D},\tilde{\epsilon}_{1}>0$ are chosen so
that $c_{2}(\theta_{Z},\theta_{D},\tilde{\epsilon}_{1})>0$, we have
\[
\begin{array}{c}
\|x_{i}^{0}-x_{i}^{+}\|^{2}\overset{\eqref{eq:last-case}}{\geq}c_{2}(\theta_{Z},\theta_{D},\tilde{\epsilon}_{1})^{2}\|\mathbf{x}^{0}\|^{2}\overset{\eqref{eq:case-2-a}}{\geq}(\frac{1}{2}+\bar{\epsilon})^{-1}c_{2}(\theta_{Z},\theta_{D},\tilde{\epsilon}_{1})^{2}F(\mathbf{z}^{0}).\end{array}
\]
This leads to linear convergence like in the last three lines of
\eqref{eq:case-3a-end}. 
\end{proof}

\section{\label{sec:Lift-assu}Lifting Assumption \ref{assu:alg-assu}\eqref{enu:S-eq-V}}

In this section, we show how to adjust the proof of the main result
in Section \ref{sec:Main-result} so that Assumption \ref{assu:alg-assu}\eqref{enu:S-eq-V}
can be lifted. We let $z_{i}^{+}$ and $x_{i}^{+}$ be what they were
in the proof of Theorem \ref{thm:the-thm} in Section \ref{sec:Main-result}.
We shall treat case 3a first, and then explain the similarities in
case 3b.

We can assume that there is an index $k$ such that $i\notin S_{n,k'}$
for all $k'\in\{1,\dots,k-1\}$ (which implies $z_{i}^{0}=z_{i}^{k-1}$)
and $S_{n,k}=\{i\}$. Let the operator $T:\mathbb{R}^{n}\to\mathbb{R}^{n}$
be $T(x')=\arg\min_{x}\frac{1}{2}\|x'-x\|^{2}+\delta_{P_{i}}(x)$.
Define $\tilde{x}_{i}^{k}$ as 
\[
\tilde{x}_{i}^{k}:=T(x_{i}^{k-1}+z_{i}^{k-1})=T(x_{i}^{k-1}+z_{i}^{0}).
\]
Note also that $\tilde{x}_{i}^{+}=T(x_{i}^{0}+z_{i}^{0})$. Since
$\partial\delta_{P_{i}}(\cdot)$ is a monotone operator, the operator
$T(\cdot)$ is nonexpansive (see for example the textbook \cite{BauschkeCombettes11}),
which gives $\|\tilde{x}_{i}^{k}-\tilde{x}_{i}^{+}\|\leq\|x_{i}^{k-1}-x_{i}^{0}\|$.
We have 
\begin{equation}
\|x_{i}^{0}-\tilde{x}_{i}^{+}\|\leq\|x_{i}^{0}-x_{i}^{k-1}\|+\|x_{i}^{k-1}-\tilde{x}_{i}^{k}\|+\|\tilde{x}_{i}^{k}-\tilde{x}_{i}^{+}\|\leq2\|x_{i}^{0}-x_{i}^{k-1}\|+\|x_{i}^{k-1}-\tilde{x}_{i}^{k}\|.\label{eq:use-nonexpansive}
\end{equation}
Then
\begin{align}
 & \begin{array}{c}
\underset{k'=1}{\overset{k-1}{\sum}}\|\mathbf{x}^{k'}-\mathbf{x}^{k'-1}\|^{2}+\|x_{i}^{k-1}-\tilde{x}_{i}^{k}\|^{2}\geq\underset{k'=1}{\overset{k-1}{\sum}}\|x_{i}^{k'}-x_{i}^{k'-1}\|^{2}+\|x_{i}^{k-1}-\tilde{x}_{i}^{k}\|^{2}\end{array}\nonumber \\
\geq & \begin{array}{c}
\frac{1}{k-1}\left(\underset{k'=1}{\overset{k-1}{\sum}}\|x_{i}^{k'}-x_{i}^{k'-1}\|\right)^{2}+\|x_{i}^{k-1}-\tilde{x}_{i}^{k}\|^{2}\end{array}\label{eq:bdd-of-squares}\\
\geq & \begin{array}{c}
\frac{1}{\bar{w}-1}(\|x_{i}^{k-1}-x_{i}^{0}\|^{2}+\|x_{i}^{k-1}-\tilde{x}_{i}^{k}\|^{2})\end{array}\nonumber \\
\geq & \begin{array}{c}
\frac{1}{2\bar{w}}(\|x_{i}^{k-1}-x_{i}^{0}\|+\|x_{i}^{k-1}-\tilde{x}_{i}^{k}\|)^{2}\overset{\eqref{eq:use-nonexpansive}}{\geq}\frac{1}{8\bar{w}}\|x_{i}^{0}-\tilde{x}_{i}^{+}\|^{2}.\end{array}\nonumber 
\end{align}
 The same steps as \eqref{eq:3a-finale} leads us to 
\begin{align}
 & \begin{array}{c}
\frac{1}{2}\|(x_{i}^{k-1}+z_{i}^{k-1})-z_{i}^{k-1}\|^{2}+\delta_{C_{i}}^{*}(z_{i}^{k-1})\end{array}\label{eq:new-dec-step}\\
\geq & \begin{array}{c}
\frac{1}{2}\|(x_{i}^{k-1}+z_{i}^{k-1})-z_{i}^{k}\|^{2}+\delta_{C_{i}}^{*}(z_{i}^{k})+\frac{1}{2}\|x_{i}^{k-1}-\tilde{x}_{i}^{k}\|^{2}.\end{array}\nonumber 
\end{align}
Once again, the steps similar to \eqref{eq:case-3a-end} gives 
\begin{eqnarray*}
F(\mathbf{z}^{k}) & \overset{\eqref{eq:new-dec-step}}{\leq} & \begin{array}{c}
F(\mathbf{z}^{k-1})-\frac{1}{2}\|x_{i}^{k-1}-\tilde{x}_{i}^{k}\|^{2}\end{array}\\
 & \overset{\eqref{eq:Dykstra-min-subpblm}}{\leq} & \begin{array}{c}
F(\mathbf{z}^{0})-\underset{k'=1}{\overset{k-1}{\sum}}\frac{1}{2}\|\mathbf{x}^{k'}-\mathbf{x}^{k'-1}\|^{2}-\frac{1}{2}\|x_{i}^{k-1}-\tilde{x}_{i}^{k}\|^{2}\end{array}\\
 & \overset{\eqref{eq:bdd-of-squares}}{\leq} & \begin{array}{c}
F(\mathbf{z}^{0})-\frac{1}{16\bar{w}}\|x_{i}^{0}-\tilde{x}_{i}^{+}\|^{2}\end{array}\\
 & \overset{\scriptsize{\text{Claim}}}{\leq} & \begin{array}{c}
\left(1-\frac{\phantom{M_{\max}}1\phantom{M_{\max}}}{32\bar{w}(M_{\max}\kappa_{5})^{2}}\left(1-\sqrt{\frac{\theta_{D}}{1-\theta_{D}}|V|}\right)^{2}\frac{1-\theta_{D}}{|V|}\frac{\bar{\epsilon}}{2+\bar{\epsilon}}\right)F(\mathbf{z}^{0}).\end{array}
\end{eqnarray*}
The adjustments for case 3b is similar, except that the set $P_{i}$
is set to be $C_{i}$, and $\tilde{x}_{i}^{+}$ and $\tilde{x}_{i}^{k}$
can be replaced by $x_{i}^{+}$ and $x_{i}^{k}$ respectively.  

\bibliographystyle{amsalpha}
\bibliography{../refs}

\end{document}